\documentclass[11pt]{amsart}

\usepackage{mathrsfs,amsmath,amsfonts}
\usepackage{amsthm}
\usepackage{amssymb}
\usepackage{fancyhdr}
\usepackage{CJK}
\usepackage{arydshln}
\usepackage[all]{xy}
\usepackage{tikz}
\usepackage{cite}
\usepackage{hyperref}
\usepackage[margin=1in]{geometry}
\linespread{1.2}

\pagestyle{fancy}
\fancyhf{}
\fancyhead[RO,LE]{\thepage}
\fancyfoot{\empty}

\newtheorem{thm}{Theorem}[section]
\newtheorem{prop}[thm]{Proposition}
\newtheorem{defini}[thm]{Definition}

\newtheorem{remark}[thm]{Remark}

\title{Deformations and Cohomology Theory of Rota-Baxter Systems}
\author{Yuming Liu, Kai Wang, Liwen Yin*}

\address{Yuming Liu and LiWen Yin
\newline School of Mathematical Sciences
\newline Laboratory of Mathematics and Complex Systems
\newline Beijing Normal University
\newline Beijing 100875
\newline P.R.China}
\email{ymliu@bnu.edu.cn}
\email{lwyinbnu@qq.com}
\address{Kai Wang
\newline School of Mathematical Sciences
\newline Shanghai Key Laboratory of PMMP
\newline East China Normal University
\newline Shanghai 200241
\newline P.R.China}
\email{wangkai@math.ecnu.edu.cn}

\date{\today}

\begin{document}
\renewcommand{\thefootnote}{\alph{footnote}}
\setcounter{footnote}{-1} \footnote{* Corresponding author.}
\setcounter{footnote}{-1} \footnote{\it{Mathematics Subject
Classification(2020)}: 16E40, 16S80, 16S70.}
\renewcommand{\thefootnote}{\alph{footnote}}
\setcounter{footnote}{-1} \footnote{ \it{Keywords}: abelian extension, cohomology, formal deformation, Rota-Baxter system.}

\maketitle

\begin{abstract}
Inspired by the work of Wang and Zhou \cite{WZ} for Rota-Baxter algebras, we develop a cohomology theory of Rota-Baxter systems and justify it by interpreting the lower degree cohomology groups as formal deformations and as abelian extensions of Rota-Baxter systems. A further study on an $L_\infty$-algebra structure associated to this cohomology theory will be given in a subsequent paper.
\end{abstract}	

\tableofcontents
\section{Introduction}
By a general philosophy, the deformation theory of any given mathematical object can be described by a certain differential graded (=dg) Lie algebra or more generally a $L_\infty$-algebra associated to the mathematical object. Therefore it is an important question to construct explicitly this dg Lie algebra
or $L_\infty$-algebra governing deformation theory of this mathematical object. Another important question about algebraic structures is to study their homotopy versions, just like $A_\infty$-algebras for usual associative algebras. The nicest result would be providing a minimal model of the operad governing an algebraic structure. When this operad is Koszul, there exists a general theory, the so-called Koszul duality for operads, which defines a homotopy version of this algebraic structure via the cobar construction of the Koszul dual cooperad, which, in this case, is a minimal model. However, when the operad is NOT Koszul, essential difficulties arise and few examples of minimal models have been worked out. These two questions, say, describing controlling $L_\infty$-algebras and constructing homotopy versions, are closed related. In fact, given a cofibrant resolution, in particular, a minimal model, of
the operad in question, one can form the deformation complex of the algebraic structure and construct its $L_\infty$-structure, see \cite[Introduction]{WZ} for more explanation on this method.

Recently Wang and Zhou developed an ad hoc method in \cite{WZ} to deal with the above two questions. Surprisingly, this method works well for many individual nonKoszul algebra structures. The idea of this method is as follows. Given an algebraic structure on a space $V$ realised as an algebra over an operad, by considering the formal deformations of this algebraic structure, we first construct the deformation complex and find an $L_\infty$-structure on the underlying graded space of this complex such that the Maurer-Cartan elements are in bijection with the algebraic structures on $V$. When $V$ is graded, we define a homotopy version of this algebraic structure as Maurer-Cartan elements in the $L_\infty$-algebra constructed above. Finally under suitable conditions, we could show that the operad governing the homotopy version is a minimal model of the original operad. Using their method, Wang and Zhou successfully find a minimal model of the operad governing Rota-Baxter algebra structures in \cite{WZ}. Recall that an associative algebra $A$ over a field $\mathbb{K}$ is called a Rota-Baxter algebra of weight $\lambda$ (where $\lambda\in \mathbb{K}$) if there is a $\mathbb{K}$-linear operator $R: A\to A$ (called a Rota-Baxter operator of weight $\lambda$) satisfying the equation $R(a)R(b)=R(R(a)b+aR(b)+\lambda ab)$ for all $a,b\in A$.

In the present paper, we discuss the formal deformations and cohomology theory of Rota-Baxter systems. Rota-Baxter system is a generalisation of the notion of Rota–Baxter algebra introduced by Brzezi\'{n}ski \cite{B}. This generalisation consists of two operators $R,S$ acting on an associative algebra $A$ and satisfying equations similar to the Rota–Baxter equation. Since the two operators $R,S$ are wrapped up in each other, in order to develop a cohomology theory of Rota-Baxter systems, we have to modify the construction of the cohomology cochain complex in \cite{WZ} for Rota-Baxter algebras, see Section 4 for the details.

In a subsequent paper, we will discuss an $L_\infty$-algebra structure over the cochain complex of the Rota-Baxter system and realize the Rota-Baxter system structures as the Maurer-Cartan elements of this $L_\infty$-algebra, parallel to the work on Rota-Baxter algebras in \cite[Section 8]{WZ}.

This paper is organised as follows. Section 2 contains a quick review on formal deformations and Hochschild cohomology of associative algebras. In Section 3 we give basic definitions and facts about Rota-Baxter systems and Rota-Baxter system bimodules. A cohomology cochain complex of Rota-Baxter system operators, and with the help of the usual Hochschild cocohain complex, a cochain complex, whose cohomology groups should control deformation theory of
Rota-Baxter systems, is exhibited in Section 4. We justify this cohomology theory by interpreting lower degree cohomology groups as formal deformations (Section 5) and as abelian extensions of Rota-Baxter systems (Section 6).

\section{A quick review on formal deformations and Hochschild cohomology of associative algebras}
Throughout this paper, let $\mathbb{K}$ be a field of arbitrary characteristic. All vector spaces are defined over $\mathbb{K}$, all tensor products and Hom-spaces are taken over $\mathbb{K}$, unless otherwise stated. Besides, all algebras considered in this paper are associative (but not necessarily unital) over $\mathbb{K}$. We say that an algebra $A$ is non-degenerate provided that for any $b\in A$, $ba=0$ or $ab=0$ for all $a\in A$ implies that $b=0$. Obviously, any unital algebra is non-degenerate. We denote by $\mathbb{K}[[t]]$ the power series ring in one variable $t$ over the field $\mathbb{K}$.

In this section, we give a quick review on formal deformations and Hochschild cohomology of associative algebras. For backgrounds and more details on these subjects, we refer to \cite{G1}, \cite{G2}, and \cite{WZ}.

\subsection{Hochschild cohomology of associative algebras}
Let $(A, \mu)$ be an associative $\mathbb{K}$-algebra. We often write $\mu(a\otimes b) = a \cdot b = ab$ for any $a, b \in A$, and for $a_1,\dots, a_n\in A$, we write $a_{1,n}:=a_1\otimes \cdots \otimes a_n \in A^{\otimes n}$. Let $M$ be a bimodule over $A$. Recall that the Hochschild cochain complex of $A$ with coefficients in $M$ is
\[\mathrm{C}_{\mathrm{Alg}}^\bullet(A, M) := \bigoplus_{n=0}^\infty \mathrm{C}_{\mathrm{Alg}}^n(A, M),\]
where $\mathrm{C}_{\mathrm{Alg}}^n(A, M) = \mathrm{Hom}(A^{\otimes n}, M)$ and the differential $\delta^n : \mathrm{C}_{\mathrm{Alg}}^n(A, M) \to \mathrm{C}_{\mathrm{Alg}}^{n+1}(A, M)$ is defined as:
\begin{align*}
& \delta^n(f)(a_{1,n+1}) \\
={} & (-1)^{n+1}a_1f(a_{2,n+1}) + \sum_{i=1}^n(-1)^{n-i+1}f(a_{1,i-1}\otimes a_i\cdot a_{i+1}\otimes a_{i+2,n+1}) + f(a_{1,n})a_{n+1}
\end{align*}
for all $f\in \mathrm{C}_{\mathrm{Alg}}^n(A, M), a_1,\ldots ,a_{n+1}\in A.$

The cohomology of the Hochschild cochain complex $\mathrm{C}_{\mathrm{Alg}}^\bullet(A, M)$ is called the Hochschild cohomology of $A$ with coefficients in $M$, denoted by $\mathrm{HH}^\bullet(A,M)$. When the bimodule $M$ is the regular bimodule $A$ itself, we just denote $\mathrm{C}_{\mathrm{Alg}}^\bullet(A, A)$ by $\mathrm{C}_{\mathrm{Alg}}^\bullet(A)$ and call it the Hochschild cochain complex of the associative algebra $(A, \mu)$. Denote the cohomology $\mathrm{HH}^\bullet(A, A)$ by $\mathrm{HH}^\bullet(A)$, called the Hochschild cohomology of the associative algebra $(A, \mu)$.

\subsection{Formal deformations of associative algebras}
Given an associative $\mathbb{K}$-algebra $(A, \mu)$, consider $\mathbb{K}[[t]]$-bilinear associative multiplications on
\[ A[[t]]=\{\sum_{i=0}^\infty a_it^i| a_i\in A, \forall i\geqslant 0 \}.\]
Such a multiplication is determined by
\[\mu_t = \sum_{i=0}^\infty \mu_it^i: A\otimes A \to A[[t]],\]
where for all $i\geqslant 0,\,\mu_i : A\otimes A \to A$ are $k$-linear maps. When $\mu_0 = \mu$, we say that $\mu_t$
is a formal deformation of $\mu$ and $\mu_1$ is called the infinitesimal of the formal deformation $\mu_t$.
The only constraint is the associativity of $\mu_t$:
\[ \mu_t(a\otimes \mu_t(b\otimes c)) = \mu_t(\mu_t(a\otimes b)\otimes c), \forall a, b, c\in A,\]
which is equivalent to the following family of equations:
\begin{equation}
  \sum_{i+j=n} \mu_i(\mu_j(a\otimes b)\otimes c)-\mu_i(a\otimes \mu_j(b\otimes c))=0,\forall a, b, c\in A, n\geqslant 0.\label{2}
\end{equation}
Looking closely at the cases $n = 0$ and $n = 1$, one obtains:

  (i) when $n = 0, (a \cdot b) \cdot c = a \cdot (b \cdot c), \forall a, b, c \in A$, which is exactly the associativity of $\mu$;

 (ii) when $n=1$,
  \[a\mu_1(b\otimes c)-\mu_1(ab\otimes c)+\mu_1(a\otimes bc)-\mu_1(a\otimes b)c=0, \forall a, b, c \in A,\]
  which says that the infinitesimal $\mu_1$ is a 2-cocycle in the Hochschild cochain complex $\mathrm{C}_{\mathrm{Alg}}^\bullet(A)$.

\section{Rota-Baxter systems and Rota-Baxter system bimodules}
In this section, we first recall the definition of Rota-Baxter systems and define the Rota-Baxter system bimodules, and then we give several interesting observations about them, following similar ideas from \cite[Definiton 4.3]{WZ}.

\begin{defini} \label{definition-RBS} {\rm(see \cite[Definition 2.1]{B})} A triple $(A,R,S)$ consisting of an associative algebra $A$ and two $\mathbb{K}$-linear operators $ R, S : A\to A$ is called a Rota-Baxter system if , for all $a,b\in A$,
\begin{gather}
R(a)R(b)=R(R(a)b+aS(b)),\label{3}\\
S(a)S(b)=S(R(a)b+aS(b)).\label{4}
\end{gather}
In this case, $(R, S)$ are called Rota-Baxter system operators.
\end{defini}

\begin{remark} {\rm(see \cite[Lemma 2.2]{B})} \label{R+id} Let $A$ be an algebra. If $R$ is a Rota-Baxter operator of weight $\lambda$ on $A$, then $(A, R, R + \lambda id)$ and $(A, R + \lambda id, R)$ are Rota-Baxter systems.
\end{remark}

\begin{remark} {\rm(see \cite[Lemma 2.3]{B})} Let $A$ be an algebra. Let $R:A\to A$ be a left $A$-linear map and $S:A\to A$  be a right $A$-linear map.
Then $(A, R, S)$ is a  Rota-Baxter system if and only if, for all $a,b\in A$,
\begin{equation}
  aR\circ S(b)=0=S\circ R(a)b.
\end{equation}
In particular, if $A$ is a non-degenerate algebra, then $(A, R, S)$ is a  Rota-Baxter system if and only if $R$ and $S$ satisfy the orthogonality condition
\begin{equation}
  R\circ S=S\circ R=0.
\end{equation}
\end{remark}

\begin{remark} {\rm(see \cite[Remark 2.6]{B})} \label{morphism-RBS} A morphism of Rota-Baxter systems from $(A, R_A, S_A)$ to $(B, R_B, S_B)$ is an algebra map $ f : A\to B$ rendering the following diagrams commutative:
$$\xymatrix{
 A \ar[r]^{R_A}\ar[d]_f & A \ar[d]^f  &A \ar[r]^{S_A}\ar[d]_f & A \ar[d]^f\\
 B\ar[r]^{R_B} & B,  & B\ar[r]^{S_B} & B.
 }$$
 \end{remark}

\begin{defini} Let $(A, R, S)$ be a Rota-Baxter system and $M$ be a bimodule over the associative algebra $A$. We say that $M$ is a bimodule over Rota-Baxter system $(A, R, S)$ or a Rota-Baxter system bimodule if $M$ is endowed with two linear operators $ R_M, S_M : M \to M $ such that the following equations hold for any $a \in A$ and $m \in M$:
\begin{gather}
R(a)R_M(m)=R_M(R(a)m+aS_M(m)),\label{5}\\
R_M(m)R(a)=R_M(R_M(m)a+mS(a)),\\
S(a)S_M(m)=S_M(R(a)m+aS_M(m)),\\
S_M(m)S(a)=S_M(R_M(m)a+mS(a)).
\end{gather}
\end{defini}

Of course, $(A, R, S)$ itself is a bimodule over the Rota-Baxter system $(A, R, S)$, called the regular Rota-Baxter system bimodule.

\begin{prop}   {\rm(Compare to \cite[Proposition 4.4]{WZ})} Let $(A, R, S)$ be a Rota-Baxter system and $M$ be a bimodule over associative algebra $A$. It is well known that $A\oplus M$ becomes an associative algebra whose multiplication is
\begin{equation}
(a, m)(b, n) = (ab, an + mb).
\end{equation}
Write $\iota : A\to {A\oplus M}, a\mapsto (a, 0)$ and $\pi : {A\oplus M} \to A, (a, m) \mapsto a$. Then ${A\oplus M}$ is a Rota-Baxter system such that $\iota$ and $\pi$ are both morphisms of Rota-Baxter systems if and only if $M$ is a Rota-Baxter system bimodule over $A$.
This new Rota-Baxter system will be denoted by $A\ltimes M$, called the semi-direct product (or trivial extension) of $A$ by $M$.
\end{prop}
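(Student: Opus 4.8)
The plan is to first pin down the Rota-Baxter system operators on $A\oplus M$ using the morphism conditions, and then to translate the two defining equations of a Rota-Baxter system (Definition \ref{definition-RBS}) into the four defining equations of a Rota-Baxter system bimodule by expanding, comparing components, and decoupling. \textbf{Step 1 (the operators are forced).} Suppose $A\oplus M$ carries Rota-Baxter system operators $(\mathcal R,\mathcal S)$ making $\iota$ and $\pi$ morphisms of Rota-Baxter systems. Writing a general $\mathbb K$-linear endomorphism of $A\oplus M$ in block form $\mathcal R(a,m)=(\alpha(a)+\beta(m),\gamma(a)+\delta(m))$, the condition $R\circ\pi=\pi\circ\mathcal R$ from Remark \ref{morphism-RBS} forces $\alpha=R$ and $\beta=0$, while $\mathcal R\circ\iota=\iota\circ R$ forces $\gamma=0$; hence $\mathcal R(a,m)=(R(a),R_M(m))$ for a uniquely determined $\mathbb K$-linear map $R_M:=\delta\colon M\to M$, and symmetrically $\mathcal S(a,m)=(S(a),S_M(m))$ for some $S_M\colon M\to M$. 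Thus the statement reduces to: there exist linear operators $R_M,S_M$ on $M$ with $(A\oplus M,\mathcal R,\mathcal S)$ a Rota-Baxter system if and only if $(M,R_M,S_M)$ is a Rota-Baxter system bimodule.

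\textbf{Step 2 (expand and compare components).} For $\mathcal R$ as above, a direct computation using the multiplication $(a,m)(b,n)=(ab,an+mb)$ gives
\[\mathcal R(a,m)\mathcal R(b,n)=\bigl(R(a)R(b),\ R(a)R_M(n)+R_M(m)R(b)\bigr)\]
and
\[\mathcal R\bigl(\mathcal R(a,m)(b,n)+(a,m)\mathcal S(b,n)\bigr)=\bigl(R(R(a)b+aS(b)),\ R_M(R(a)n+R_M(m)b+aS_M(n)+mS(b))\bigr).\]
The $A$-components coincide exactly because $(A,R,S)$ satisfies \eqref{3}, so equation \eqref{3} for $(A\oplus M,\mathcal R,\mathcal S)$ is equivalent to the single identity
\[R(a)R_M(n)+R_M(m)R(b)=R_M\bigl(R(a)n+R_M(m)b+aS_M(n)+mS(b)\bigr),\qquad a,b\in A,\ m,n\in M.\]

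\textbf{Step 3 (decouple and repeat).} Setting $m=0$ in this identity yields \eqref{5}, while setting $n=0$ yields the second bimodule axiom $R_M(m)R(a)=R_M(R_M(m)a+mS(a))$. Conversely, since $R_M$ is additive the right-hand side splits as $R_M(R(a)n+aS_M(n))+R_M(R_M(m)b+mS(b))$, so adding the two specializations recovers the full identity; hence \eqref{3} for $A\oplus M$ holds iff the first two bimodule axioms do. Running the identical argument with the outer operator $\mathcal S$ in place of $\mathcal R$ shows that \eqref{4} for $A\oplus M$ is equivalent to the remaining two bimodule axioms. Combining the three steps proves the proposition. I expect no serious obstacle: the only point requiring a touch of care is the decoupling in Step 3, where one must observe that although the component identity mixes $m$ and $n$, it is additive in the operators $R_M,S_M$, so that the specializations $m=0$ and $n=0$ are not merely necessary but jointly sufficient; everything else is bookkeeping with the semidirect-product multiplication, which also explains the name, since the underlying algebra of the new Rota-Baxter system is the usual trivial extension $A\ltimes M$.
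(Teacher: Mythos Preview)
Your proof is correct and follows essentially the same route as the paper: first use the morphism conditions on $\iota$ and $\pi$ to pin down $\mathcal R(a,m)=(R(a),R_M(m))$ and $\mathcal S(a,m)=(S(a),S_M(m))$, then expand the Rota-Baxter system identities on $A\oplus M$, compare $M$-components, and decouple by setting one of the two $M$-variables to zero. Your explicit remark that additivity of $R_M$ makes the two specializations jointly sufficient is a slight improvement in presentation over the paper, which simply states that the converse verification is easy.
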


\begin{proof}
First we notice that the associative algebra ${A\oplus M}$ is a Rota-Baxter system such that $\iota$ and $\pi$ are both morphisms of Rota-Baxter systems
if and only if there exist two $\mathbb{K}$-linear operators $ R^{\prime}, S^{\prime} : A\oplus M \to A\oplus M$ such that the following equations hold for any $a_1, a_2, a \in A$ and $m_1, m_2, m \in M$:
\begin{gather}
  R^{\prime}(a_1,m_1) R^{\prime}(a_2,m_2)=R^{\prime}\Bigl(R^{\prime}(a_1,m_1)(a_2,m_2) +(a_1,m_1) S^{\prime}(a_2,m_2)\Bigr),\label{R} \\
  S^{\prime}(a_1,m_1) S^{\prime}(a_2,m_2)=S^{\prime}\Bigl(R^{\prime}(a_1,m_1)(a_2,m_2) +(a_1,m_1) S^{\prime}(a_2,m_2)\Bigr),\label{S} \\
  R^{\prime}\circ \iota (a)=\iota \circ R(a)=(R(a),0),\label{i} \\
  \pi \circ R^{\prime}(0,m)=R\circ \pi (0,m)=0,\label{ii}\\
   S^{\prime}\circ \iota (a)=\iota \circ S(a)=(S(a),0), \label{iii} \\
  \pi \circ S^{\prime}(0,m)=S\circ \pi (0,m)=0, \label{iv}
  \end{gather}
where the last four equations (\ref{i})-(\ref{iv}) come from the definition of morphism of Rota-Baxter systems (cf. Remark \ref{morphism-RBS}) and are equivalent to the following equations
\begin{gather*}
  R^{\prime}(a,0)=(R(a),0),\\
  R^{\prime}(0,m)=(0,R_2^{\prime}(0,m)),\\
  S^{\prime}(a,0)=(S(a),0),\\
  S^{\prime}(0,m)=(0,S_2^{\prime}(0,m)).
\end{gather*}

Now we assume that the latter part of the above equivalent conditions holds and write $R_M(m):=R_2^{\prime}(0,m)$ and $S_M(m):=S_2^{\prime}(0,m)$.
Then $ R_M, S_M : M \to M $ become two linear operators, and we have
\[R^{\prime}(a,m)=R^{\prime}(a,0)+R^{\prime}(0,m)=(R(a),0)+(0,R_M(m))=(R(a),R_M(m)),\]
\[S^{\prime}(a,m)=S^{\prime}(a,0)+S^{\prime}(0,m)=(S(a),0)+(0,S_M(m))=(S(a),S_M(m)).\]
Furthermore, Equation (\ref{R}) becomes
\begin{align*}
  & \Bigl(R(a_1)R(a_2), R(a_1)R_M(m_2)+R_M(m_1)R(a_2)\Bigr)  \\
  ={} &\Bigl(R(a_1),R_M(m_1)\Bigr)\Bigl(R(a_2),R_M(m_2)\Bigr)\\
  ={} & R^{\prime}\Bigl(\bigl(R(a_1),R_M(m_1)\bigr)(a_2,m_2)+(a_1,m_1)\bigl(S(a_2),S_M(m_2)\bigr)\Bigr) \\
  ={} & R^{\prime}\Bigl(\bigl(R(a_1)a_2,R(a_1)m_2+R_M(m_1)a_2\bigr)+\bigl(a_1S(a_2),a_1S_M(m_2)+m_1S(a_2)\bigr)\Bigr)\\
  ={} & R^{\prime}\Bigl(R(a_1)a_2+a_1S(a_2),\bigl(R(a_1)m_2+a_1S_M(m_2)\bigr)+\bigl(R_M(m_1)a_2+m_1S(a_2)\bigr)\Bigr)\\
  ={} & \Bigl(R(a_1)R(a_2),R_M\bigl(R(a_1)m_2+a_1S_M(m_2)\bigr)+R_M\bigl(R_M(m_1)a_2+m_1S(a_2)\bigr)\Bigr),
    \end{align*}
and Equation (\ref{S}) becomes
\begin{align*}
  & \Bigl(S(a_1)S(a_2), S(a_1)S_M(m_2)+S_M(m_1)S(a_2)\Bigr)  \\
  ={} & \Bigl(S(a_1),S_M(m_1)\Bigr)\Bigl(S(a_2),S_M(m_2)\Bigr)\\
  ={} & S^{\prime}\Bigl(\bigl(R(a_1),R_M(m_1)\bigr)(a_2,m_2)+(a_1,m_1)\bigl(S(a_2),S_M(m_2)\bigr)\Bigr)\\
   ={} & S^{\prime}\Bigl(\bigl(R(a_1)a_2,R(a_1)m_2+R_M(m_1)a_2\bigr)+\bigl(a_1S(a_2),a_1S_M(m_2)+m_1S(a_2)\bigr)\Bigr)\\
  ={} & S^{\prime}\Bigl(R(a_1)a_2+a_1S(a_2),\bigl(R(a_1)m_2+a_1S_M(m_2)\bigr)+\bigl(R_M(m_1)a_2+m_1S(a_2)\bigr)\Bigr)\\
  ={} & \Bigl(S(a_1)S(a_2),S_M\bigl(R(a_1)m_2+a_1S_M(m_2)\bigr)+S_M\bigl(R_M(m_1)a_2+m_1S(a_2)\bigr)\Bigr).
    \end{align*}
Let $m_1=0$, we get the following equations
\begin{gather*}
  R(a_1)R_M(m_2)=R_M\bigl(R(a_1)m_2+a_1S_M(m_2)\bigr),\\
  S(a_1)S_M(m_2)=S_M\bigl(R(a_1)m_2+a_1S_M(m_2)\bigr).
\end{gather*}
Similarly, let $m_2=0$, we get
\begin{gather*}
  R_M(m_1)R(a_2)=R_M\bigl(R_M(m_1)a_2+m_1S(a_2)\bigr), \\
  S_M(m_1)S(a_2)=S_M\bigl(R_M(m_1)a_2+m_1S(a_2)\bigr).
\end{gather*}
The above four equations show that $M$ is a Rota-Baxter system bimodule over $A$.

Finally, given a Rota-Baxter system bimodule $M$ over $A$ with two $\mathbb{K}$-linear operators $R_M, S_M$, we define two $\mathbb{K}$-linear operators $R',S': A\oplus M\rightarrow A\oplus M$ by
$$R'(a,m)=(R(a),R_M(m)), \quad S'(a,m)=(S(a),S_M(m)).$$
Then it is easy to verify that $R'$ and $S'$ satisfy Equations (\ref{R})-(\ref{iv}). This finishes the proof of our proposition.
\end{proof}

\begin{prop}\label{star} \rm{(see \cite[Corollary 2.7]{B})} Let $(A, \mu, R, S)$ be a Rota-Baxter system. Define a new binary operation over $A$ as
\begin{equation}
  a\star b:= R(a)\cdot b+ a\cdot S(b)
\end{equation}
for any $a,b\in A$. Then the operation $\star$ is associative and $(A,\star)$ is a new associative algebra and we denote it by $A_\star$.
\end{prop}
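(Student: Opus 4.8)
The plan is to verify the associativity identity $(a\star b)\star c = a\star(b\star c)$ directly, by expanding both sides through the definition of $\star$ twice and then invoking the two defining equations \eqref{3} and \eqref{4} of a Rota-Baxter system together with the associativity of the original product $\mu$.

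First I would expand the left-hand side as $(a\star b)\star c = R(a\star b)\cdot c + (a\star b)\cdot S(c)$ and substitute $a\star b = R(a)b + aS(b)$. This produces the term $R\bigl(R(a)b + aS(b)\bigr)\cdot c$, which by \eqref{3} equals $R(a)R(b)\cdot c$, together with $\bigl(R(a)b + aS(b)\bigr)\cdot S(c)$. Using the associativity of $\mu$, the left-hand side becomes $R(a)R(b)c + R(a)bS(c) + aS(b)S(c)$.

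Next I would expand the right-hand side symmetrically as $a\star(b\star c) = R(a)\cdot(b\star c) + a\cdot S(b\star c)$ and substitute $b\star c = R(b)c + bS(c)$. Here the term $a\cdot S\bigl(R(b)c + bS(c)\bigr)$ equals $a\cdot S(b)S(c)$ by \eqref{4}, while $R(a)\cdot\bigl(R(b)c + bS(c)\bigr)$ expands, again using associativity of $\mu$, to $R(a)R(b)c + R(a)bS(c)$. Thus the right-hand side also equals $R(a)R(b)c + R(a)bS(c) + aS(b)S(c)$, so the two sides coincide, and $(A,\star)$ is an associative algebra on the same underlying space, which we name $A_\star$.

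There is essentially no serious obstacle: this is a short direct computation. The only point needing care is the bookkeeping, namely that equation \eqref{3} is used to collapse the $R$-term arising from the left-hand expansion while equation \eqref{4} is used to collapse the $S$-term arising from the right-hand expansion, so that both Rota-Baxter system axioms are genuinely required (neither alone suffices), and that the associativity of $\mu$ is invoked to match the remaining triple products on the two sides.
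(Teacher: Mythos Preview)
Your proposal is correct: the direct expansion of $(a\star b)\star c$ and $a\star(b\star c)$, followed by one application each of \eqref{3} and \eqref{4} and the associativity of $\mu$, is exactly the standard verification. The paper itself does not supply a proof for this proposition---it simply cites \cite[Corollary~2.7]{B}---so there is nothing further to compare; your computation is the expected argument and would serve as a complete proof.
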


\begin{remark} \label{new-RBS} Let $(A, \mu, R, S)$ be a Rota-Baxter system. If $R$ and $S$ satisfy $R\circ S=S\circ R$,
then one can verify that $(A,\star, R, S)$ is also a Rota-Baxter system.
\end{remark}

One can construct new bimodules over the associative algebra $A_\star$ from Rota-Baxter system bimodules over $(A, \mu, R, S)$.

\begin{prop}\label{triangle} Let $(A, \mu, R, S)$ be a Rota-Baxter system and $(M, R_M, S_M)$ be a Rota-Baxter system bimodule over it. We define a left action $``\vartriangleright"$, and a right action $``\vartriangleleft"$ of $A$ on the space $M\oplus M$ as follows: for any $a\in A, m_1,m_2\in M$,
\begin{gather}
  a\vartriangleright (m_1,m_2):= (R(a)m_1-R_M(am_2),S(a)m_2-S_M(am_2)), \\
  (m_1,m_2)\vartriangleleft a:= (m_1R(a)-R_M(m_1a),m_2S(a)-S_M(m_1a)).
\end{gather}
Then these actions make $M\oplus M$ into a bimodule over $A_\star$, and we denote this new bimodule by $_\vartriangleright \mathscr{D}(M)_\vartriangleleft$.
\end{prop}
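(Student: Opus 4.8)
The plan is to verify, by three direct computations, the defining identities of an $A_\star$-bimodule for the space $M\oplus M$ with the operations $\vartriangleright$ and $\vartriangleleft$: left-associativity $(a\star b)\vartriangleright x = a\vartriangleright(b\vartriangleright x)$, right-associativity $x\vartriangleleft(a\star b) = (x\vartriangleleft a)\vartriangleleft b$, and the middle compatibility $(a\vartriangleright x)\vartriangleleft b = a\vartriangleright(x\vartriangleleft b)$, for all $a,b\in A$ and $x=(m_1,m_2)\in M\oplus M$ (that $\star$ is associative, so that $A_\star$ is an algebra, is Proposition \ref{star}). The only ingredients are $\mathbb{K}$-linearity of $R,S,R_M,S_M$, the associativity of $\mu$, the $A$-bimodule axioms of $M$, the Rota-Baxter system identities \eqref{3}--\eqref{4}, and the four defining relations of the Rota-Baxter system bimodule $(M,R_M,S_M)$.

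The key preliminary observation is that \eqref{3} and \eqref{4} yield $R(a\star b)=R\bigl(R(a)b+aS(b)\bigr)=R(a)R(b)$ and $S(a\star b)=S(a)S(b)$; combined with the $A$-module identity $(a\star b)m = R(a)(bm)+a(S(b)m)$, this rewrites the left-hand side of the left-associativity axiom into a form in which each occurrence of $R_M$ and $S_M$ already carries an argument of the shape $R(a)n+a\,n'$. Expanding the right-hand side $a\vartriangleright(b\vartriangleright x)$ instead produces the terms $R(a)R_M(bm_2)$ and $S(a)S_M(bm_2)$; rewriting these by \eqref{5}, that is $R(a)R_M(m)=R_M\bigl(R(a)m+aS_M(m)\bigr)$, and its $S$-analogue $S(a)S_M(m)=S_M\bigl(R(a)m+aS_M(m)\bigr)$, both with $m:=bm_2$, brings the two sides to the same shape, and the extraneous $R_M\bigl(a\,S_M(bm_2)\bigr)$- and $S_M\bigl(a\,S_M(bm_2)\bigr)$-summands cancel; both sides then collapse to the common value $\bigl(R(a)R(b)m_1 - R_M(R(a)(bm_2)+a(S(b)m_2)),\ S(a)S(b)m_2 - S_M(R(a)(bm_2)+a(S(b)m_2))\bigr)$. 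The right-associativity axiom is entirely parallel: one uses $R(a\star b)=R(a)R(b)$ and $S(a\star b)=S(a)S(b)$ on the right, together with the other two defining relations $R_M(m)R(a)=R_M\bigl(R_M(m)a+mS(a)\bigr)$ and $S_M(m)S(a)=S_M\bigl(R_M(m)a+mS(a)\bigr)$ applied with $m:=m_1 b$.

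For the compatibility identity one expands $(a\vartriangleright x)\vartriangleleft b$ and $a\vartriangleright(x\vartriangleleft b)$ and finds that, in the first coordinate, the discrepancy disappears after applying $R_M(m)R(a)=R_M\bigl(R_M(m)a+mS(a)\bigr)$ with $m:=am_2$ on one side and $R(a)R_M(m)=R_M\bigl(R(a)m+aS_M(m)\bigr)$ with $m:=m_1 b$ on the other; using $(R(a)m_1)b = R(a)(m_1 b)$ and cancelling the $R_M\bigl(a\,S_M(m_1 b)\bigr)$-terms, both sides reduce to $R(a)m_1 R(b) - R_M\bigl(R(a)(m_1 b)+a\,m_2\,S(b)\bigr)$. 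The second coordinate is handled the same way, with the two $S_M$-relations playing the role of the two $R_M$-relations. No use is made of $R\circ S=S\circ R$ or of the non-degeneracy of $A$, so the proposition holds in the generality stated.

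I do not anticipate any genuine obstacle: once the reductions $R(a\star b)=R(a)R(b)$ and $S(a\star b)=S(a)S(b)$ are in hand, all three identities are routine, and in each of them the second-coordinate computation is parallel to the first with $S_M$ in place of $R_M$. The fussiest point is the compatibility identity, since it is the one computation that calls on both a ``left'' and a ``right'' defining relation of the Rota-Baxter system bimodule within the same chain of equalities, applied to two different arguments; keeping track of which relation to apply to which summand, and with which substitution, so that all the pure $A$-module terms cancel, is where care is needed.
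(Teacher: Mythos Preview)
Your proposal is correct and follows essentially the same route as the paper: a direct verification of the three bimodule identities by expanding both sides and simplifying via $R(a\star b)=R(a)R(b)$, $S(a\star b)=S(a)S(b)$, and the four Rota-Baxter system bimodule relations. One small slip: in your sketch of right-associativity the substitution should be $m:=m_1a$ (not $m:=m_1b$), since the term to rewrite is $R_M(m_1a)R(b)$ coming from $\bigl((m_1,m_2)\vartriangleleft a\bigr)\vartriangleleft b$.
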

\begin{proof} Firstly, we show that $(M\oplus M, \vartriangleright)$ is a left module over $A_\star$, that is
\[a \vartriangleright (b\vartriangleright (m_1,m_2)) = (a\star b)\vartriangleright (m_1,m_2).\]
On the one hand,
\begin{align*}
  & a \vartriangleright (b\vartriangleright (m_1,m_2))\\
   ={} &  a \vartriangleright \Bigl((R(b)m_1-R_M(bm_2),S(b)m_2-S_M(bm_2))\Bigr)\\
  ={} & \Bigl(R(a)(R(b)m_1-R_M(bm_2))- R_M(aS(b)m_2-aS_M(bm_2)),\\
  & S(a)(S(b)m_2-S_M(bm_2))- S_M(aS(b)m_2-aS_M(bm_2))\Bigr)\\
  ={} & \Bigl(R(a)R(b)m_1-R_M(R(a)bm_2+aS(b)m_2),\\
  & S(a)S(b)m_2-S_M(R(a)bm_2+aS(b)m_2)\Bigr);
\end{align*}
On the other hand,
\begin{align*}
  & (a\star b)\vartriangleright (m_1,m_2) \\
  ={} & \Bigl(R(a\star b)m_1-R_M((a\star b)m_2),S(a\star b)m_2-S_M((a\star b)m_2)\Bigr) \\
  ={} & \Bigl(R(a)R(b)m_1-R_M(R(a)bm_2+aS(b)m_2),\\
  & S(a)S(b)m_2-S_M(R(a)bm_2+aS(b)m_2)\Bigr).
\end{align*}

Next one can similarly check that the operation $\vartriangleleft$ defines a right module structure on $M\oplus M$ over $A_\star$.

Finally, we have the
equations:
\begin{align*}
  & (a\vartriangleright (m_1,m_2))\vartriangleleft b \\
  ={} & \Bigl(R(a)m_1-R_M(am_2),S(a)m_2-S_M(am_2)\Bigr)\vartriangleleft b  \\
  ={} & \Bigl((R(a)m_1-R_M(am_2))R(b)-R_M(R(a)m_1b-R_M(am_2)b),\\
  & (S(a)m_2-S_M(am_2))S(b)-S_M(R(a)m_1b-R_M(am_2)b)\Bigr)\\
  ={} & \Bigl(R(a)m_1R(b)-R_M(R(a)m_1b+am_2S(b)),\\
  & S(a)m_2S(b)-S_M(R(a)m_1b+am_2S(b))\Bigr),
\end{align*}
\begin{align*}
  & a\vartriangleright ((m_1,m_2)\vartriangleleft b) \\
  ={} & a\vartriangleright \Bigl((m_1R(b)-R_M(m_1b),m_2S(b)-S_M(m_1b))\Bigr) \\
  ={} & \Bigl(R(a)(m_1R(b)-R_M(m_1b)) - R_M(am_2S(b)-aS_M(m_1b)),\\
  & S(a)(m_2S(b)-S_M(m_1b))-S_M(am_2S(b)-aS_M(m_1b))\Bigr)\\
  ={} & \Bigl(R(a)m_1R(b)-R_M(R(a)m_1b+am_2S(b)),\\
  & S(a)m_2S(b)-S_M(R(a)m_1b+am_2S(b))\Bigr),
\end{align*}
which give the compatibility of operations $\vartriangleright $ and $\vartriangleleft$:
\[(a\vartriangleright (m_1,m_2))\vartriangleleft b =a\vartriangleright ((m_1,m_2)\vartriangleleft b).\]
\end{proof}

\begin{remark} \rm{(1)} Unlike the Rota-Baxter algebra situation as in \cite[Proposition 4.7]{WZ}, in order to define the new bimodule $\mathscr{D}(M)$ over the associative algebra $A_\star$ we have changed the space from $M$ to $M\oplus M$.

\rm{(2)} If $R$ and $S$ satisfy $R\circ S=S\circ R$, then $(A,\star, R, S)$ is also a Rota-Baxter system (cf. Remark \ref{new-RBS}). In this case,  we can define a Rota-Baxter system bimodule $(\mathscr{D}(M),R_{\mathscr{D}(M)},S_{\mathscr{D}(M)})$ over $(A,\star, R, S)$ by letting $$R_{\mathscr{D}(M)}: M\oplus M\to M\oplus M, (m_1,m_2)\mapsto (R_M(m_1),R_M(m_2)),$$
$$S_{\mathscr{D}(M)}: M\oplus M\to M\oplus M, (m_1,m_2)\mapsto (S_M(m_1),S_M(m_2)).$$
\end{remark}

\section{Cohomology theory of Rota-Baxter systems}
In this section, we will define a cohomology theory for Rota-Baxter systems following (and modifying) the ideas from \cite[Section 5]{WZ}.

\subsection{Cohomology of Rota-Baxter system operators}
Firstly, let's study the cohomology of Rota-Baxter system operators.
Let $(A, \mu, R, S)$ be a Rota-Baxter system and $(M, R_M, S_M)$ be a Rota-Baxter system bimodule over it. Recall
that Proposition \ref{star} and Proposition \ref{triangle} give a new associative algebra $A_\star $ and a new bimodule
$_\vartriangleright \mathscr{D}(M)_\vartriangleleft$ over $A_\star$. Consider the Hochschild cochain complex of $A_\star $  with coefficients in $_\vartriangleright \mathscr{D}(M)_\vartriangleleft$:
\[\mathrm{C}_{\mathrm{Alg}}^\bullet(A_\star,{_\vartriangleright \mathscr{D}(M)_\vartriangleleft}):= \bigoplus_{n=0}^\infty \mathrm{C}_{\mathrm{Alg}}^n(A_\star, {_\vartriangleright \mathscr{D}(M) _\vartriangleleft}).\]
More precisely, for $ n\geqslant 0,$
\[ \mathrm{C}_{\mathrm{Alg}}^n(A_\star, {_\vartriangleright\mathscr{D}(M)_\vartriangleleft})= { \rm Hom}(A^{\otimes n}, M\oplus M)\cong{ \rm Hom}(A^{\otimes n}, M)\oplus { \rm Hom}(A^{\otimes n},M)\]
and its differential
\[\partial^n : \mathrm{C}_{\mathrm{Alg}}^n(A_\star, {_\vartriangleright\mathscr{D}(M)_\vartriangleleft}) \to \mathrm{C}_{\mathrm{Alg}}^{n+1}(A_\star, {_\vartriangleright\mathscr{D}(M)_\vartriangleleft})\]
is defined by the following formula:
\begin{align*}
   & \partial^0(f,g)(a) \\
  ={} & \Bigl(-R(a)f(1_\mathbb{K})+R_M(a\,g(1_\mathbb{K}))+f(1_\mathbb{K})R(a)-R_M(f(1_\mathbb{K})\,a),\\
  & -S(a)g(1_\mathbb{K})+S_M(a\,g(1_\mathbb{K}))+g(1_\mathbb{K})S(a)-S_M(f(1_\mathbb{K})\,a)\Bigr)
\end{align*}
for any $f,\,g\in { \rm Hom}(\mathbb{K},M)$, $a\in A$,
\begin{align*}
  & \partial^n(x,y)(a_{1,n+1}) \\
  ={} & (-1)^{n+1}a_1\vartriangleright (x, y)(a_{2,n+1})\\
  + & \sum_{i=1}^n(-1)^{n-i+1}(x,y)(a_{1,i-1}\otimes a_i\star a_{i+1}\otimes a_{i+2,n+1}) \\
  + & (x, y)(a_{1,n})\vartriangleleft a_{n+1}\\
   ={} & \Bigl((-1)^{n+1} R(a_1)x(a_{2,n+1})-(-1)^{n+1} R_M(a_1 y(a_{2,n+1}))\\
   + & \sum_{i=1}^n(-1)^{n-i+1}x(a_{1,i-1}\otimes R(a_i)a_{i+1}+a_iS(a_{i+1})\otimes a_{i+2,n+1})\\
   + & x(a_{1,n})R(a_{n+1}) - R_M(x(a_{1,n})a_{n+1}), \\
   & (-1)^{n+1}S(a_1)y(a_{2,n+1})- (-1)^{n+1}S_M(a_1 y(a_{2,n+1})) \\
   + & \sum_{i=1}^n(-1)^{n-i+1} y(a_{1,i-1}\otimes R(a_i)a_{i+1}+a_iS(a_{i+1})\otimes a_{i+2,n+1}) \\
   + & y(a_{1,n})S(a_{n+1}) - S_M(x(a_{1,n})a_{n+1}) \Bigr)
 \end{align*}
for $ n\geqslant 1,$ $x,y\in \mathrm{C}_{\mathrm{Alg}}^n(A, M)$ and $a_1,\ldots ,a_{n+1}\in A$.

\begin{defini} Let $A=(A, \mu, R, S)$ be a Rota-Baxter system and $M=(M, R_M, S_M)$ be a Rota-Baxter system bimodule over it. Then the Hochschild cochain complex $\Bigl(\mathrm{C}_{\mathrm{Alg}}^\bullet(A_\star, {_\vartriangleright\mathscr{D}(M)_\vartriangleleft}), \partial^\bullet \Bigr)$ is called the cochain complex of Rota-Baxter system operators $(R,S)$ with coefficients in $(M, R_M, S_M)$, denoted by $\mathrm{C}_{\mathrm{RBSO}}^\bullet(A, M)$.
The cohomology of $\mathrm{C}_{\mathrm{RBSO}}^\bullet(A, M)$, denoted by $\mathrm{H}_{\mathrm{RBSO}}^\bullet(A, M)$, is called the cohomology of Rota-Baxter system operators $(R,S)$ with coefficients in $(M, R_M, S_M)$.
\end{defini}

When $(M, R_M, S_M)$ is the regular Rota-Baxter system bimodule $(A, R, S)$, we denote $\mathrm{C}_{\mathrm{RBSO}}^\bullet(A, A)$ by $\mathrm{C}_{\mathrm{RBSO}}^\bullet(A)$ and call it the cochain complex of Rota-Baxter system operators $(R,S)$, and denoted $\mathrm{H}_{\mathrm{RBSO}}^\bullet(A, A)$ by $\mathrm{H}_{\mathrm{RBSO}}^\bullet(A)$ and call it the cohomology of Rota-Baxter system operators $(R,S)$.

\subsection{Cohomology of Rota-Baxter systems}
In this subsection, we will combine the Hochschild cochain complex of associative algebra $(A,\mu)$ and the cochain complex of Rota-Baxter system operators $(R,S)$ to define a cohomology theory for Rota-Baxter system $(A, \mu, R, S)$.

Let $M=(M, R_M, S_M)$ be a Rota-Baxter system bimodule over a Rota-Baxter system $A=(A, \mu, R, S)$. Let $\mathrm{C}_{\mathrm{Alg}}^\bullet(A,M)$ be the Hochschild cochain complex of $(A,\mu)$ with coefficients in $M$ and $\mathrm{C}_{\mathrm{RBSO}}^\bullet(A,M)$ be the cochain complex of Rota-Baxter system operators $(R,S)$ with coefficients in $(M, R_M, S_M)$. We now define a chain map $\Phi^\bullet : \mathrm{C}_{\mathrm{Alg}}^\bullet(A,M) \to \mathrm{C}_{\mathrm{RBSO}}^\bullet(A,M)$ as follows.

Define $\Phi^0= (\Phi_R^0,\,\Phi_S^0) :\mathrm{C}_{\mathrm{Alg}}^0(A,M)=\mathrm{Hom}(\mathbb{K},M) \to \mathrm{C}_{\mathrm{RBSO}}^0(A,M)\cong \mathrm{Hom}(\mathbb{K},M)\oplus \mathrm{Hom}(\mathbb{K},M)$ by
\[\Phi_R^0 = \Phi_S^0 = \mathrm{Id}_{\mathrm{Hom}(\mathbb{K},M)},\]
 and for $n\geqslant 1$, define\\
  $\Phi^n= (\Phi_R^n,\,\Phi_S^n) :
   \mathrm{C}_{\mathrm{Alg}}^n(A,M)= \mathrm{Hom}(A^{\otimes n},M) \to \mathrm{C}_{\mathrm{RBSO}}^n(A,M)\cong \mathrm{Hom}(A^{\otimes n},M)\oplus \mathrm{Hom}(A^{\otimes n},M)$ by
\begin{align*}
  &\Phi_R^n(f)(a_1\otimes\cdots\otimes a_n)\\
  ={} & f\Bigl(R(a_1)\otimes\cdots\otimes R(a_n)\Bigr) \\
  - & R_M\circ \sum_{i=1}^nf\Bigl(R(a_1)\otimes\cdots\otimes R(a_{i-1})\otimes a_i\otimes S(a_{i+1})\cdots \otimes S(a_n)\Bigr),
\end{align*}
 \begin{align*}
  &\Phi_S^n(f)(a_1\otimes\cdots\otimes a_n)\\
  ={} & f\Bigl(S(a_1)\otimes\cdots\otimes S(a_n)\Bigr) \\
  - & S_M\circ \sum_{i=1}^nf\Bigl(R(a_1)\otimes\cdots\otimes R(a_{i-1})\otimes a_i\otimes S(a_{i+1})\cdots \otimes S(a_n)\Bigr)
\end{align*}
for $f\in \mathrm{C}_{\mathrm{Alg}}^n(A,M)$.

\begin{prop} The map $\Phi^\bullet : \mathrm{C}_{\mathrm{Alg}}^\bullet(A,M) \to \mathrm{C}_{\mathrm{RBSO}}^\bullet(A,M)$ is a chain map, that is, the following diagram commutes:
 \begin{equation*}
 \xymatrix{
  \mathrm{C}_{\mathrm{Alg}}^0(A,M) \ar[r]^{\delta^0}\ar[d]^{\Phi^0} &  \mathrm{C}_{\mathrm{Alg}}^1(A,M) \ar@{-->}[r] \ar[d]^{\Phi^1}
 & \mathrm{C}_{\mathrm{Alg}}^n(A,M) \ar[r]^{\delta^n}\ar[d]^{\Phi^n} &  \mathrm{C}_{\mathrm{Alg}}^{n+1}(A,M) \ar[d]^{\Phi^{n+1}}\cdots\\
\mathrm{C}_{\mathrm{RBSO}}^0(A,M)\ar[r]^{\partial^0} &  \mathrm{C}_{\mathrm{RBSO}}^1(A,M)\ar@{-->}[r] & \mathrm{C}_{\mathrm{RBSO}}^n(A,M) \ar[r]^{\partial^n} &  \mathrm{C}_{\mathrm{RBSO}}^{n+1}(A,M)\cdots
 }
 \end{equation*}
\end{prop}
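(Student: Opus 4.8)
The plan is to check $\partial^n\circ\Phi^n=\Phi^{n+1}\circ\delta^n$ for every $n\geqslant 0$ by a direct but organized computation. Since every element of $\mathrm{C}_{\mathrm{RBSO}}^\bullet(A,M)$ is a pair, it suffices to verify, for each $f\in\mathrm{C}_{\mathrm{Alg}}^n(A,M)$, that the first component of $\partial^n(\Phi_R^n f,\Phi_S^n f)$ equals $\Phi_R^{n+1}(\delta^n f)$ and its second component equals $\Phi_S^{n+1}(\delta^n f)$. The case $n=0$ is immediate: from $\delta^0 f(a)=-af(1_\mathbb{K})+f(1_\mathbb{K})a$ one gets
\[\Phi_R^1(\delta^0 f)(a)=\delta^0 f(R(a))-R_M\bigl(\delta^0 f(a)\bigr)=-R(a)f(1_\mathbb{K})+f(1_\mathbb{K})R(a)+R_M\bigl(af(1_\mathbb{K})\bigr)-R_M\bigl(f(1_\mathbb{K})a\bigr),\]
which is exactly the first component of $\partial^0(\Phi^0 f)(a)=\partial^0(f,f)(a)$; the second component is obtained by replacing $(R,R_M)$ with $(S,S_M)$ throughout.

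For $n\geqslant 1$ I would abbreviate, for a cochain $f\in\mathrm{C}_{\mathrm{Alg}}^n(A,M)$,
\[\beta_f(a_{1,n}):=\sum_{i=1}^n f\bigl(R(a_1)\otimes\cdots\otimes R(a_{i-1})\otimes a_i\otimes S(a_{i+1})\otimes\cdots\otimes S(a_n)\bigr),\]
so that $\Phi_R^n f(a_{1,n})=f(R(a_1)\otimes\cdots\otimes R(a_n))-R_M(\beta_f(a_{1,n}))$ and $\Phi_S^n f(a_{1,n})=f(S(a_1)\otimes\cdots\otimes S(a_n))-S_M(\beta_f(a_{1,n}))$, and similarly at level $n+1$ with $\beta_{\delta^n f}$. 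One then substitutes these into the explicit formula for $\partial^n$ (with $x=\Phi_R^n f$, $y=\Phi_S^n f$) and expands. Two mechanisms clear away the bulk of the terms. First, the four Rota-Baxter system bimodule identities --- (\ref{5}) and the three that follow it --- are applied to the four ``double-operator'' contributions $R(a_1)R_M(\beta_f(a_{2,n+1}))$, $R_M(\beta_f(a_{1,n}))R(a_{n+1})$, $S(a_1)S_M(\beta_f(a_{2,n+1}))$, $S_M(\beta_f(a_{1,n}))S(a_{n+1})$; each such rewriting splits off exactly one term that cancels a contribution of the pieces $-R_M(a_1 y(a_{2,n+1}))$, $-R_M(x(a_{1,n})a_{n+1})$ (and their $S_M$-analogues) of $\partial^n$. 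In the second component this last cancellation works precisely because $x=\Phi_R^n f$ carries an $R_M$ --- the very operator produced by applying the fourth bimodule identity --- and this is the one place where the asymmetric roles of $x$ and $y$ in the formula for $\partial^n$ are essential. Second, inside the arguments of $f$ one replaces $R(a)R(b)$ by $R(a\star b)$ and $S(a)S(b)$ by $S(a\star b)$ using (\ref{3})--(\ref{4}), and conversely splits an un-twisted slot by linearity of $f$ using $a\star b=R(a)b+aS(b)$. After these steps the part of each component not involving $R_M$ or $S_M$ collapses, by recognizing the Hochschild differential, to $\delta^n f(R(a_1)\otimes\cdots\otimes R(a_{n+1}))$ (resp. $\delta^n f(S(a_1)\otimes\cdots\otimes S(a_{n+1}))$), while the remaining terms assemble as $-R_M$ (resp. $-S_M$) of one and the same element of $M$.

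The whole proposition thus comes down to a single, purely Hochschild-level identity: that this common element equals $\beta_{\delta^n f}(a_{1,n+1})$, i.e. (spelling out the latter on the left)
\begin{align*}
& \sum_{i=1}^{n+1}\delta^n f\bigl(R(a_1)\otimes\cdots\otimes R(a_{i-1})\otimes a_i\otimes S(a_{i+1})\otimes\cdots\otimes S(a_{n+1})\bigr) \\
={} & (-1)^{n+1}\Bigl(a_1 f\bigl(S(a_2)\otimes\cdots\otimes S(a_{n+1})\bigr)+R(a_1)\beta_f(a_{2,n+1})\Bigr) \\
+ & \sum_{i=1}^{n}(-1)^{n-i+1}\beta_f(a_{1,i-1}\otimes a_i\star a_{i+1}\otimes a_{i+2,n+1}) \\
+ & \beta_f(a_{1,n})S(a_{n+1})+f\bigl(R(a_1)\otimes\cdots\otimes R(a_n)\bigr)a_{n+1}.
\end{align*}
To prove this I would expand the left-hand side by the Hochschild formula for each of the $n+1$ summands and sort all resulting terms according to the position of the single un-twisted argument $a_i$ relative to the slot $k$ being contracted. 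The $i=1$ left-boundary terms give $(-1)^{n+1}a_1 f(S(a_2)\otimes\cdots\otimes S(a_{n+1}))$ and the $i\geqslant 2$ left-boundary terms reassemble into $(-1)^{n+1}R(a_1)\beta_f(a_{2,n+1})$; the right-boundary terms behave symmetrically; and for the contraction at slot $k$, summing over all positions of $a_i$ --- invoking (\ref{3})--(\ref{4}) when $a_i$ lies strictly on one side of the contracted pair, and merging the $i=k$ and $i=k+1$ summands via $R(a_k)a_{k+1}+a_k S(a_{k+1})=a_k\star a_{k+1}$ when $a_i$ \emph{is} the contracted slot --- reproduces $\beta_f(a_{1,k-1}\otimes a_k\star a_{k+1}\otimes a_{k+2,n+1})$ with its sign $(-1)^{n-k+1}$. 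The main obstacle is exactly this bookkeeping: one must carry the signs through the three-way case split (un-twisted slot left of, equal to, or right of the contracted pair) and confirm that, summed over all $i$ and $k$, every term of the right-hand side is produced once and only once. Once the displayed identity is in hand, the two component equalities, and hence commutativity of the diagram, follow.
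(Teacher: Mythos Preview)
Your proposal is correct and follows essentially the same route as the paper: a direct verification that $\partial^n\circ\Phi^n=\Phi^{n+1}\circ\delta^n$, treating $n=0$ separately and then expanding both sides for $n\geqslant 1$. The paper simply writes out both sides in full and asserts that they coincide by inspection, whereas your introduction of the auxiliary $\beta_f$, your explicit use of the bimodule identities to produce the four cancellations, and especially your reduction of both components to the \emph{single} Hochschild-level identity displayed in your proposal constitute a cleaner organization of the same computation; this is a presentational improvement rather than a different argument.
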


\begin{proof}
We just need to prove $\partial^n\circ \Phi^n(f)=\Phi^{n+1}\circ \delta^n(f)$ for any $n\geqslant 0$ and for any $f\in \mathrm{C}_{\mathrm{Alg}}^n(A,M)$.

When $n=0, f\in {\rm Hom}(\mathbb{K},M), a\in A$, we have
\begin{align*}
& \partial^0\circ \Phi^0(f)(a) \\
  ={} & \partial^0(f,f)(a)\\
  ={} & \Bigl(-R(a)f(1_{\mathbb{K}})+f(1_{\mathbb{K}})R(a)-R_M(f(1_{\mathbb{K}})a)+R_M(af(1_{\mathbb{K}})),\\
  ={} &-S(a)f(1_{\mathbb{K}})+f(1_{\mathbb{K}})S(a)-S_M(f(1_{\mathbb{K}})a)+S_M(af(1_{\mathbb{K}}))\Bigr);
\end{align*}
on the other hand, we have
\begin{align*}
& \Phi^{1}\circ \delta^0(f)(a) \\
  ={} & \Bigl(\delta^0(f)(R(a))-R_M(\delta^0(f)(a)),\delta^0(f)(S(a))-S_M(\delta^0(f)(a))\Bigr) \\
  ={} & \Bigl(-R(a)f(1_{\mathbb{K}})+f(1_{\mathbb{K}})R(a)+R_M(af(1_{\mathbb{K}})-f(1_{\mathbb{K}})a),\\
  ={} &-S(a)f(1_{\mathbb{K}})+f(1_{\mathbb{K}})S(a)+S_M(af(1_{\mathbb{K}})-f(1_{\mathbb{K}})a)\Bigr).
\end{align*}

This proves $\partial^0\circ \Phi^0=\Phi^1\circ \delta^0$.

When $n\geqslant 1$, for $f\in \mathrm{C}_{\mathrm{Alg}}^n(A, M), a_1,\ldots ,a_{n+1}\in A$, we have (here we write $R\cdot f(a_{1,n+1}):=R(a_1)f(a_{2,n+1})$)

\begin{align*}
   & \partial^n\circ \Phi^n(f) \\
    ={} & \partial^n\circ (\Phi_R^{n},\Phi_S^{n})(f) \\
  ={} & \partial^n \Bigl(f(R^{\otimes n})-R_M\sum_{i=1}^nf(R^{\otimes i-1}\otimes Id\otimes S^{\otimes n-i} ), f(S^{\otimes n})-S_M\sum_{i=1}^nf(R^{\otimes i-1}\otimes Id\otimes S^{\otimes n-i} )\Bigr)\\
  ={} & \Biggl( (-1)^{n+1}R\cdot \Bigl(f(R^{\otimes n})-R_M\sum_{i=1}^nf(R^{\otimes i-1}\otimes Id\otimes S^{\otimes n-i} )\Bigr)\\
   - & (-1)^{n+1}R_M\Bigl\{Id\cdot \Bigl(f(S^{\otimes n})-S_M\sum_{i=1}^nf(R^{\otimes i-1}\otimes Id\otimes S^{\otimes n-i} )\Bigr ) \Bigr\}\\
   + & \sum_{i=1}^{n}(-1)^{n-i+1}f\Bigl(R^{\otimes i-1}\otimes R(R\cdot Id + Id\cdot S)\otimes R^{\otimes n-i}\Bigr )\\
   - & R_M\sum_{i=1}^{n}\Bigl\{\sum_{1\leqslant j\leqslant i-1}(-1)^{n-j+1}f\Bigl(R^{\otimes j-1}\otimes R(R\cdot Id+Id\cdot S)
   \otimes R^{\otimes i-j-2}\otimes Id\otimes S^{\otimes n-i+1} \Bigr)\\
   + & (-1)^{n-i+1}f(R^{\otimes i-1}\otimes (R\cdot Id +Id\cdot S)\otimes S^{\otimes n-i} )\\
   + & \sum_{i+1\leqslant j\leqslant n}(-1)^{n-j+1}f\Bigl(R^{\otimes i-1}\otimes Id\otimes S^{\otimes j-i-1} \otimes
   S( R\cdot Id +Id\cdot S)\otimes S^{\otimes n-j} \Bigr)\Bigr\}\\
   + & \Bigl(f(R^{\otimes n})-R_M\sum_{i=1}^nf(R^{\otimes i-1}\otimes Id\otimes S^{\otimes n-i} )\Bigr)\cdot R\\
   - & R_M \Bigl\{\Bigl(f(R^{\otimes n})-R_M\sum_{i=1}^nf(R^{\otimes i-1}\otimes Id\otimes S^{\otimes n-i} )\Bigr)\cdot Id\Bigr\},
\end{align*}

\begin{align*}
   & (-1)^{n+1}S\cdot \Bigl(f(S^{\otimes n})-S_M\sum_{i=1}^nf(R^{\otimes i-1}\otimes Id\otimes S^{\otimes n-i} )\Bigr)\\
   -& (-1)^{n+1}S_M\Bigl\{Id\cdot \Bigl(f(S^{\otimes n})-S_M\sum_{i=1}^nf(R^{\otimes i-1}\otimes Id\otimes S^{\otimes n-i} )\Bigr ) \Bigr\}\\
   + & \sum_{i=1}^{n}(-1)^{n-i+1}f\Bigl(S^{\otimes i-1}\otimes S(R\cdot Id + Id\cdot S)\otimes S^{\otimes n-i}\Bigr )\\
   - & S_M\sum_{i=1}^{n}\Bigl\{\sum_{1\leqslant j\leqslant i-1}(-1)^{n-j+1}f\Bigl(R^{\otimes j-1}\otimes R(R\cdot Id+Id\cdot S)
   \otimes R^{\otimes i-j-2}\otimes Id\otimes S^{\otimes n-i+1} \Bigr)\\
   + & (-1)^{n-i+1}f(R^{\otimes i-1}\otimes (R\cdot Id +Id\cdot S)\otimes S^{\otimes n-i} )\\
   + & \sum_{i+1\leqslant j\leqslant n}(-1)^{n-j+1}f\Bigl(R^{\otimes i-1}\otimes Id\otimes S^{\otimes j-i-1} \otimes
   S( R\cdot Id +Id\cdot S)\otimes S^{\otimes n-j} \Bigr)\Bigr\}\\
   + & \Bigl(f(S^{\otimes n})-S_M\sum_{i=1}^nf(R^{\otimes i-1}\otimes Id\otimes S^{\otimes n-i} )\Bigr)\cdot S\\
   - & S_M \Bigl\{\Bigl(f(R^{\otimes n})-R_M\sum_{i=1}^nf(R^{\otimes i-1}\otimes Id\otimes S^{\otimes n-i} )\Bigr)\cdot Id\Bigr\}\Biggr),
   \end{align*}
and
\begin{align*}
& \Phi^{n+1}\circ \delta^n(f) \\
   ={}& (\Phi_R^{n+1},\Phi_S^{n+1})\circ \delta^n(f) \\
   ={} & \biggl(\delta^n(f)(R^{\otimes n+1})-R_M\sum_{i=1}^{n+1}\delta^n(f)(R^{\otimes i-1}\otimes Id\otimes S^{\otimes n-i+1} ),\\
    & \delta^n(f)(S^{\otimes n+1})-S_M\sum_{i=1}^{n+1}\delta^n(f)(R^{\otimes i-1}\otimes Id\otimes S^{\otimes n-i+1} )\biggr)\\
   ={} & \Biggl(
   (-1)^{n+1}R\cdot f(R^{\otimes n})+ \sum_{i=1}^{n}(-1)^{n-i+1}f(R^{\otimes i-1}\otimes R\cdot R\otimes R^{\otimes n-i} )+ f(R^{\otimes n})\cdot R\\
   - & R_M\Bigl\{ (-1)^{n+1}Id\cdot f(S^{\otimes n})+(-1)^{n+1}\sum_{i=2}^{n+1}R\cdot f(R^{\otimes i-2}\otimes Id\otimes S^{\otimes n-i+1})\\
     +& \sum_{i=1}^{n+1}\bigl\{\sum_{1\leqslant j\leqslant i-2}(-1)^{n-j+1}f(R^{\otimes j-1}\otimes R\cdot R\otimes R^{\otimes i-j-2}\otimes Id\otimes S^{\otimes n-i+1} )\\
   + & (-1)^{n-i+2}f(R^{\otimes i-2}\otimes R\cdot Id\otimes S^{\otimes n-i+1} )+ (-1)^{n-i+1}f(R^{\otimes i-1}\otimes Id\cdot S\otimes S^{\otimes n-i} )\\
   + & \sum_{i+1\leqslant j\leqslant n}(-1)^{n-j+1} f(R^{\otimes i-1}\otimes Id\otimes S^{\otimes j-i-1} \otimes S\cdot S\otimes S^{\otimes n-j} )\bigr\}\\
   + & \sum_{i=1}^nf(R^{\otimes i-1}\otimes Id\otimes S^{\otimes n-i})\cdot S +f(R^{\otimes n})\cdot Id)\Bigr\},
   \end{align*}
\begin{align*}
  & (-1)^{n+1}S\cdot f(S^{\otimes n})+ \sum_{i=1}^{n}(-1)^{n-i+1}f(S^{\otimes i-1}\otimes S\cdot S\otimes S^{\otimes n-i} )+ f(S^{\otimes n})\cdot S\\
   - &S_M\Bigl\{ (-1)^{n+1}Id\cdot f(S^{\otimes n})+(-1)^{n+1}\sum_{i=2}^{n+1}R\cdot f(R^{\otimes i-2}\otimes Id\otimes S^{\otimes n-i+1})\\
     +& \sum_{i=1}^{n+1}\bigl\{\sum_{1\leqslant j\leqslant i-2}(-1)^{n-j+1}f(R^{\otimes j-1}\otimes R\cdot R\otimes R^{\otimes i-j-2}\otimes Id\otimes S^{\otimes n-i+1} )\\
   + & (-1)^{n-i+2}f(R^{\otimes i-2}\otimes R\cdot Id\otimes S^{\otimes n-i+1} )+ (-1)^{n-i+1}f(R^{\otimes i-1}\otimes Id\cdot S\otimes S^{\otimes n-i} )\\
   + & \sum_{i+1\leqslant j\leqslant n}(-1)^{n-j+1} f(R^{\otimes i-1}\otimes Id\otimes S^{\otimes j-i-1} \otimes S\cdot S\otimes S^{\otimes n-j} )\bigr\}\\
   + & \sum_{i=1}^nf(R^{\otimes i-1}\otimes Id\otimes S^{\otimes n-i})\cdot S +f(R^{\otimes n})\cdot Id)\Bigr\}
   \Biggr).
 \end{align*}
Comparing the above two equations we obtain $\partial^n\circ \Phi^n(f)=\Phi^{n+1}\circ \delta^n(f)$. Hence, $\Phi^\bullet $ is a chain map.
\end{proof}

\begin{defini} Let $M=(M, R_M, S_M)$ be a Rota-Baxter system bimodule over a Rota-Baxter system $A=(A, R, S)$. We define the cochain complex $(\mathrm{C}_{\mathrm{RBS}}^\bullet(A,M), d^\bullet)$ of Rota-Baxter system $(A, R, S)$ with coefficients in $M$ to be the negative shift of the mapping cone of $\Phi^\bullet$, that is, we have
\begin{align*}
\mathrm{C}_{\mathrm{RBS}}^0(A,M) ={} & \mathrm{C}_{\mathrm{Alg}}^0(A,M),\\
\mathrm{C}_{\mathrm{RBS}}^n(A,M) ={} & \mathrm{C}_{\mathrm{Alg}}^n(A,M)\oplus \mathrm{C}_{\mathrm{RBSO}}^{n-1}(A,M), \forall n\geqslant 1,
\end{align*}
and the differential $d^n : \mathrm{C}_{\mathrm{RBS}}^n(A,M)\to \mathrm{C}_{\mathrm{RBS}}^{n+1}(A,M)$ is given by
\begin{equation}
d^n(f, (x, y)) = (\delta^n(f), -\partial^{n-1}(x, y) - \Phi^n(f)) \label{differential}
\end{equation}
for any $f\in \mathrm{C}_{\mathrm{Alg}}^n(A,M), x,y\in \mathrm{C}_{\mathrm{Alg}}^{n-1}(A,M)$. The cohomology of $(\mathrm{C}_{\mathrm{RBS}}^\bullet(A,M), d^\bullet)$, denoted by $\mathrm{H}_{\mathrm{RBS}}^\bullet(A,M)$, is called the cohomology of Rota-Baxter system $(A, R, S)$ with coefficients in $M$. When $(M, R_M, S_M)=(A,  R, S)$, we just denote $\mathrm{C}_{\mathrm{RBS}}^\bullet(A,A), \mathrm{H}_{\mathrm {RBS}}^\bullet(A,A)$ by $\mathrm{C}_{\mathrm{RBS}}^\bullet(A), \mathrm{H}_{\mathrm{RBS}}^\bullet(A)$ respectively, and call them the cochain complex, the cohomology of Rota-Baxter system $(A, R, S)$ respectively.
\end{defini}

There is an obvious short exact sequence of complexes:
\begin{equation}
  \xymatrix@C=0.5cm{
    0 \ar[r] & s\mathrm{C}_{\mathrm{RBSO}}^\bullet(A,M) \ar[r] & \mathrm{C}_{\mathrm{RBS}}^\bullet(A,M) \ar[r] & \mathrm{C}_{\mathrm{Alg}}^\bullet(A,M) \ar[r] & 0 }
\end{equation}
which induces a long exact sequence of cohomology groups
\begin{gather*}
  \xymatrix@C=0.5cm{
    0 \ar[r] & \mathrm{H}_{\mathrm{RBS}}^0(A,M) \ar[r] & {\mathrm{HH}}^0(A,M) \ar[r] & \mathrm{H}_{\mathrm{RBSO}}^0(A,M) \ar[r] & \mathrm{H}_{\mathrm{RBS}}^1(A,M) \ar[r] & \cdots } \\
  \xymatrix@C=0.5cm{
    \cdots \ar[r] & {\mathrm{HH}}^p(A,M) \ar[r] & \mathrm{H}_{\mathrm{RBSO}}^p(A,M) \ar[r] & \mathrm{H}_{\mathrm{RBS}}^{p+1}(A,M) \ar[r] & \cdots}
\end{gather*}

\begin{remark}
Let $(A, R, \lambda)$ be a Rote-Baxter algebra of weight $\lambda$. Then $(A, R, R+\lambda id)$ is a Rote-Baxter system (cf. Remark \ref{R+id}). We observe that there is a monomorphism from the cochain complex $(\mathrm{C}_{\mathrm{RBA}_\lambda}^\bullet(A), d^\bullet)$ of Rote-Baxter algebra $(A, R, \lambda)$ defined in \cite{WZ} to the cochain complex $(\mathrm{C}_{\mathrm{RBS}}^\bullet(A), d^\bullet)$ of Rote-Baxter system $(A, R, R+\lambda id)$:
\[ \psi^n = \left(
\begin{array}{cc}
1 & 0\\
0 & 1\\
0 & 1\\
\end{array}\right) : \mathrm{C}_{\mathrm{RBA}_\lambda}^n(A) = \mathrm{C}_{\mathrm{Alg}}^{n}(A)\oplus \mathrm{C}_{\mathrm{Alg}}^{n-1}(A) \to \mathrm{C}_{\mathrm{RBS}}^n(A) \cong \mathrm{C}_{\mathrm{Alg}}^{n}(A)\oplus \mathrm{C}_{\mathrm{Alg}}^{n-1}(A)\oplus \mathrm{C}_{\mathrm{Alg}}^{n-1}(A)\]
for all $n\geqslant 0$ and with $\mathrm{Coker}\psi^n = \mathrm{C}_{\mathrm{Alg}}^{n-1}(A)=\mathrm{Hom}(A^{\otimes n-1},A)$. Furthermore, there is a short exact sequence of complexes:
\begin{equation*}
  \xymatrix@C=0.5cm{
    0 \ar[r] & \mathrm{C}_{\mathrm{RBA}_\lambda}^\bullet(A) \ar[r]^{\psi} & \mathrm{C}_{\mathrm{RBS}}^\bullet(A) \ar[r] & \mathrm{Coker}\psi \ar[r] & 0, }
\end{equation*}
where the differential $\bar{d}^n:\mathrm{Coker}\psi^n=\mathrm{Hom}(A^{\otimes n-1},A) \to \mathrm{Coker}\psi^{n+1}=\mathrm{Hom}(A^{\otimes n},A)$ of the complex $\mathrm{Coker}\psi$ is given by
\begin{align*}
   & \bar{d}^n(h)(a_{1,n}) \\
  ={} & (-1)^{n-1} R(a_1)h(a_{2,n}) + \sum_{i=1}^{n-1} (-1)^{n-i-1} h(a_{1,i-1}\otimes (R(a_i)a_{i+1} + a_i(R+\lambda )(a_{i+1}))\otimes a_{i+2,n} ) \\
  & - h(a_{1,n-1})(R+\lambda )(a_{n}).
\end{align*}
\end{remark}

\section{Formal deformations of Rota-Baxter systems and cohomological interpretation}
In this section, we will study formal deformations of Rota-Baxter systems and interpret them
via lower degree cohomology groups of Rota-Baxter systems defined in last section.

\subsection{Formal deformations of Rota-Baxter systems}
Let $(A, \mu, R, S)$ be a Rota-Baxter system. Consider a 1-parameter family:
\begin{gather*}
  \mu_t = \sum_{i=0}^\infty \mu_it^i, \quad \mu_i\in \mathrm{C}_{\mathrm{Alg}}^2(A),  \\
  R_t = \sum_{i=0}^\infty R_it^i, \quad S_t = \sum_{i=0}^\infty S_it^i,\quad R_i,S_i\in \mathrm{C}_{\mathrm{Alg}}^1(A).
\end{gather*}

\begin{defini} A 1-parameter formal deformation of Rota-Baxter system $(A, \mu, R, S)$  is a triple $(\mu_t, R_t, S_t)$ which endows the  $\Bbbk[[t]]$-module $A[[t]]$ with a Rota-Baxter system structure over $\Bbbk[[t]]$ such that $(\mu_0, R_0, S_0)$ = $(\mu, R, S)$.
\end{defini}

Power series $\mu_t$, $R_t$ and $S_t$ determine a 1-parameter formal deformation of Rota-Baxter system
$(A, \mu, R, S)$ if and only if for any $a, b, c\in A$, the following equations hold:
\begin{gather*}
  \mu_t(a\otimes \mu_t(b\otimes c)) = \mu_t(\mu_t(a\otimes b)\otimes c), \\
  \mu_t(R_t(a)\otimes R_t(b)) = R_t\Bigl(\mu_t(R_t(a)\otimes b) + \mu_t(a\otimes S_t(b))\Bigr), \\
  \mu_t(S_t(a)\otimes S_t(b)) = S_t\Bigl(\mu_t(R_t(a)\otimes b) + \mu_t(a\otimes S_t(b))\Bigr).
\end{gather*}

By expanding these equations and comparing the coefficients of $t^n$, we obtain that $\{\mu_i\}_{i\geq0}$, $\{R_i\}_{i\geq0}$ and $\{S_i\}_{i\geq0}$ have to satisfy: for any $n\geqslant 0$,
\begin{gather}
  \sum_{i=0}^n \mu_i\circ (\mu_{n-i}\otimes Id) = \sum_{i=0}^n \mu_i\circ (Id\otimes \mu_{n-i}),\label{mu1} \\
  \sum_{\substack{i+j+k=n \\
  i,j,k\geqslant0}}
  \mu_i\circ (R_j\otimes R_k) = \sum_{\substack{i+j+k=n \\
  i,j,k\geqslant0}}
  R_i\circ \mu_j\circ (R_k\otimes Id) + \sum_{\substack{i+j+k=n \\
  i,j,k\geqslant0}}
  R_i\circ \mu_j\circ (Id\otimes S_k),\label{R1}\\
  \sum_{\substack{i+j+k=n \\
  i,j,k\geqslant0}}
  \mu_i\circ (S_j\otimes S_k) = \sum_{\substack{i+j+k=n \\
  i,j,k\geqslant0}}
  S_i\circ \mu_j\circ (R_k\otimes Id) + \sum_{\substack{i+j+k=n \\
  i,j,k\geqslant0}}
  S_i\circ \mu_j\circ (Id\otimes S_k).\label{S1}
  \end{gather}

Obviously, when $n = 0$, the above conditions are exactly the associativity of $\mu = \mu_0$ and Equations (\ref{3})(\ref{4}) which are the defining relations of Rota-Baxter system operators $R_0 = R$, $S_0 = S$ respectively.

Recall that we have constructed a cochain complex $\mathrm{C}_{\mathrm{RBS}}^\bullet(A)$ of Rota-Baxter system $(A, R, S)$ in last section.

\begin{prop} \rm{(Compare to \cite[Proposition 6.2]{WZ})} \label{2-co}Let $(A[[t]], \mu_t, R_t, S_t)$ be a 1-parameter formal deformation of Rota-Baxter system
$(A, \mu, R, S)$. Then $(\mu_1, R_1, S_1)$ is a 2-cocycle in the cochain complex $\mathrm{C}_{\mathrm{RBS}}^\bullet(A)$ of Rota-Baxter system $(A, R, S)$.
\end{prop}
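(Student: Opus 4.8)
The plan is to unwind the $2$-cocycle condition in $\mathrm{C}_{\mathrm{RBS}}^\bullet(A)$ and match it, term by term, against the coefficients of $t^1$ in the deformation equations (\ref{mu1}), (\ref{R1}), (\ref{S1}). A $2$-cochain in $\mathrm{C}_{\mathrm{RBS}}^\bullet(A)$ is a triple $(f,(x,y))$ with $f\in\mathrm{C}_{\mathrm{Alg}}^2(A)$ and $x,y\in\mathrm{C}_{\mathrm{Alg}}^1(A)$, and by the formula (\ref{differential}) for $d^\bullet$ we have $d^2(f,(x,y))=\bigl(\delta^2(f),\,-\partial^1(x,y)-\Phi^2(f)\bigr)$. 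Hence $(\mu_1,(R_1,S_1))$ is a $2$-cocycle if and only if
\[
\delta^2(\mu_1)=0 \qquad\text{and}\qquad \partial^1(R_1,S_1)+\Phi^2(\mu_1)=0 ,
\]
so the proof breaks into checking these two identities.

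For the first one, setting $n=1$ in (\ref{mu1}) gives
\[
\mu_0\circ(\mu_1\otimes Id)+\mu_1\circ(\mu_0\otimes Id)=\mu_0\circ(Id\otimes\mu_1)+\mu_1\circ(Id\otimes\mu_0) ,
\]
and, evaluated on $a\otimes b\otimes c$ with $\mu_0=\mu$, this is exactly the assertion that $\mu_1$ is a Hochschild $2$-cocycle, i.e. $\delta^2(\mu_1)=0$; this is the computation already recalled in Section 2.2.

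For the second identity, I would set $n=1$ in (\ref{R1}); since the only triples with $i+j+k=1$ are $(1,0,0),(0,1,0),(0,0,1)$, evaluating on $a\otimes b$ (with $\mu_0=\mu$, $R_0=R$, $S_0=S$) produces
\begin{align*}
&\mu_1(R(a)\otimes R(b))+R_1(a)R(b)+R(a)R_1(b)\\
={}&R_1\bigl(R(a)b+aS(b)\bigr)+R\bigl(R_1(a)b+aS_1(b)\bigr)+R\bigl(\mu_1(R(a)\otimes b)+\mu_1(a\otimes S(b))\bigr) ,
\end{align*}
and the $n=1$ case of (\ref{S1}) gives the analogous identity with each outer $R$ replaced by $S$ (so with $R_1(a)R(b)+R(a)R_1(b)$ replaced by $S_1(a)S(b)+S(a)S_1(b)$, but still with $S(R_1(a)b)$ occurring on the right). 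On the other side, I would expand $\partial^1(R_1,S_1)(a\otimes b)$ from the $n=1$ case of the differential $\partial^\bullet$ — using the definitions of $\star$ and of the actions $\vartriangleright,\vartriangleleft$ on ${}_\vartriangleright\mathscr{D}(M)_\vartriangleleft$ specialised to the regular bimodule $M=A$, $R_M=R$, $S_M=S$ — and expand $\Phi^2(\mu_1)(a\otimes b)=\bigl(\Phi_R^2(\mu_1)(a\otimes b),\Phi_S^2(\mu_1)(a\otimes b)\bigr)$ from the defining formulas for $\Phi_R^2,\Phi_S^2$. Collecting terms, the first ($R$-)component of $\partial^1(R_1,S_1)+\Phi^2(\mu_1)$ evaluated on $a\otimes b$ is precisely the difference of the two sides of the displayed consequence of (\ref{R1}), hence $0$; likewise the second ($S$-)component is the difference of the two sides of the consequence of (\ref{S1}), hence $0$. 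Therefore $\partial^1(R_1,S_1)+\Phi^2(\mu_1)=0$, and together with $\delta^2(\mu_1)=0$ this yields $d^2(\mu_1,(R_1,S_1))=0$.

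Everything here is short because only the coefficient of $t^1$ enters; the one place that needs care is the sign bookkeeping in the $n=1$ instance of $\partial^\bullet$ together with the substitutions $R_M=R$, $S_M=S$ in $\vartriangleright,\vartriangleleft$. In particular the last summand of the $S$-component of $\partial^1$ is $-S_M(x(a_{1,n})a_{n+1})=-S(R_1(a)b)$, built from $x=R_1$ rather than from $y=S_1$, and it is exactly this ``crossed'' feature that makes the term match the summand $S(R_1(a)b)$ produced by $S_0\circ\mu_0\circ(R_1\otimes Id)$ in (\ref{S1}). Beyond this routine term-matching I do not expect any genuine obstacle.
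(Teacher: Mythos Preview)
Your proposal is correct and follows essentially the same approach as the paper's proof: both reduce the $2$-cocycle condition to $\delta^2(\mu_1)=0$ and $\Phi^2(\mu_1)+\partial^1(R_1,S_1)=0$, and then read these two identities off as the $n=1$ cases of (\ref{mu1}) and of (\ref{R1})--(\ref{S1}) respectively. Your explicit remark about the ``crossed'' term $-S_M(x(a_1)a_2)=-S(R_1(a)b)$ in the $S$-component is a nice clarification but not a departure from the paper's argument.
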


\begin{proof}
When $n=1$, Equation (\ref{mu1}) becomes
\[\mu_1\circ(\mu\otimes Id)+\mu\circ(\mu_1\otimes Id)=\mu_1\circ (Id\otimes \mu)+\mu\circ (Id\otimes \mu_1),\]
and Equations (\ref{R1})(\ref{S1}) become
\begin{align*}
 & \mu_1\circ(R\otimes R)-\{R\circ\mu_1\circ(R\otimes Id)+R\circ\mu_1\circ(Id\otimes S)\} \\
 ={} & -\{\mu\circ(R\otimes R_1)-R\circ\mu\circ(Id\otimes S_1)\} + \{R_1\circ\mu\circ(R\otimes Id)+R_1\circ\mu\circ(Id\otimes S)\}\\
 & - \{\mu\circ(R_1\otimes R)-R\circ\mu\circ(R_1\otimes Id)\},
 \end{align*}
 \begin{align*}
  & \mu_1\circ(S\otimes S)-\{S\circ\mu_1\circ(R\otimes Id)+S\circ\mu_1\circ(Id\otimes S)\} \\
 ={} & -\{\mu\circ(S\otimes S_1)-S\circ\mu\circ(Id\otimes S_1)\} + \{S_1\circ\mu\circ(R\otimes Id)+S_1\circ\mu\circ(Id\otimes S)\}\\
 & - \{\mu\circ(S_1\otimes S)-S\circ\mu\circ(R_1\otimes Id)\}.
\end{align*}
Note that the first equation is exactly $\delta^2(\mu_1)=0\in \mathrm{C}_{\mathrm{Alg}}^\bullet(A)$ and the second and the third equations show
\[\Phi^2(\mu_1) = -\partial^1(R_1,S_1)\in \mathrm{C}_{\mathrm{RBSO}}^\bullet(A).\]
So $(\mu_1, R_1, S_1)$ is a 2-cocycle in $\mathrm{C}_{\mathrm{RBS}}^\bullet(A)$ by the formula (\ref{differential}) of the differential $d$.
\end{proof}

\begin{defini} The 2-cocycle $(\mu_1, R_1, S_1)$ is called the infinitesimal of the 1-parameter formal deformation $(A[[t]], \mu_t, R_t, S_t)$ of Rota-Baxter system $(A, \mu, R, S)$.
\end{defini}

\begin{defini}
Let $(A[[t]], \mu_t, R_t, S_t)$ and $(A[[t]], \mu_t^\prime, R_t^\prime, S_t^\prime)$ be two 1-parameter formal deformations of Rota-Baxter system $(A, \mu, R, S)$. A formal isomorphism from the formal deformation $(A[[t]], \mu_t^\prime, R_t^\prime, S_t^\prime)$ to $(A[[t]], \mu_t, R_t, S_t)$ is a power series $\Psi _t = \sum_{i=0} \Psi_it^i : A[[t]] \to A[[t]]$, where $\Psi_i : A \to A$ are linear maps with $\Psi_0 = Id_A$, such that:
\begin{gather}
 \Psi_t\circ \mu_t^\prime = \mu_t\circ (\Psi_t\otimes \Psi_t),\label{mut}\\
 \Psi_t\circ R_t^\prime= R_t\circ \Psi_t,\label{rt}\\
 \Psi_t\circ S_t^\prime = S_t\circ \Psi_t.\label{st}
 \end{gather}
 In this case, we say that the two formal deformations $(A[[t]], \mu_t, R_t, S_t)$ and $(A[[t]], \mu_t^\prime, R_t^\prime, S_t^\prime)$ are equivalent.
 \end{defini}

Given a Rota-Baxter system $(A, \mu, R, S)$, the power series $\mu_t, R_t, S_t$ with $\mu_i = \delta_{i,0} \mu,\, R_i = \delta_{i,0} R,\, S_i = \delta_{i,0} S$ make $(A[[t]], \mu_t, R_t, S_t)$ into a $1$-parameter formal deformation of $(A, \mu, R, S)$. Formal deformations equivalent to this one are called trivial.

\begin{thm} The infinitesimals of two equivalent 1-parameter formal deformations of $(A, \mu, R, S)$
are in the same cohomology class in $\mathrm{H}_{\mathrm{RBS}}^2(A)$.
\end{thm}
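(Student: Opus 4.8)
The plan is to compare the two equivalent deformations order by order in $t$ at the level of the linear terms, and to show that the difference of their infinitesimals is exactly the coboundary $d^1$ of the linear term $\Psi_1$ of the formal isomorphism. Let $(A[[t]], \mu_t, R_t, S_t)$ and $(A[[t]], \mu_t', R_t', S_t')$ be equivalent via $\Psi_t = \mathrm{Id}_A + \Psi_1 t + \cdots$, and recall from Proposition \ref{2-co} that $(\mu_1, R_1, S_1)$ and $(\mu_1', R_1', S_1')$ are both $2$-cocycles in $\mathrm{C}_{\mathrm{RBS}}^\bullet(A)$. The element $\Psi_1$ lies in $\mathrm{C}_{\mathrm{Alg}}^1(A) = \mathrm{Hom}(A, A)$, which is precisely $\mathrm{C}_{\mathrm{RBS}}^1(A)$, so $d^1(\Psi_1) \in \mathrm{C}_{\mathrm{RBS}}^2(A)$ makes sense, and by the formula \eqref{differential} it equals $\bigl(\delta^1(\Psi_1),\, -\Phi^1(\Psi_1)\bigr)$ (there is no $\partial^0$ term since $\mathrm{C}_{\mathrm{RBSO}}^0$ sits in degree $1$ of the cone, and $\Psi_1$ has no component there).

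First I would extract the coefficient of $t$ from \eqref{mut}. Using $\Psi_0 = \mathrm{Id}$, $\mu_0 = \mu_0' = \mu$, the degree-one part reads
\[
\Psi_1\circ\mu + \mu_1' = \mu_1 + \mu\circ(\Psi_1\otimes\mathrm{Id}) + \mu\circ(\mathrm{Id}\otimes\Psi_1),
\]
which rearranges to $\mu_1 - \mu_1' = \delta^1(\Psi_1)$ in $\mathrm{C}_{\mathrm{Alg}}^\bullet(A)$, exactly the Hochschild statement (this is the classical Gerstenhaber computation, already implicit in Section 2). Next I would extract the coefficient of $t$ from \eqref{rt} and \eqref{st}. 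For \eqref{rt} the degree-one part gives $\Psi_1\circ R + R_1' = R_1 + R\circ\Psi_1$, i.e. $R_1 - R_1' = \Psi_1\circ R - R\circ\Psi_1$; similarly $S_1 - S_1' = \Psi_1\circ S - S\circ\Psi_1$. The remaining task is to recognize the pair $\bigl(\Psi_1\circ R - R\circ\Psi_1,\ \Psi_1\circ S - S\circ\Psi_1\bigr)$ as $-\Phi^1(\Psi_1)$, i.e. to check that it matches $-\bigl(\Psi_1(R(a)) - R_A(\Psi_1(a)),\ \Psi_1(S(a)) - S_A(\Psi_1(a))\bigr)$, which is just unwinding the definition of $\Phi^1$ in the regular-bimodule case $R_M = R$, $S_M = S$ — indeed $\Phi_R^1(f)(a) = f(R(a)) - R(f(a))$ and $\Phi_S^1(f)(a) = f(S(a)) - S(f(a))$. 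Putting the three pieces together yields $(\mu_1 - \mu_1', R_1 - R_1', S_1 - S_1') = \bigl(\delta^1(\Psi_1), -\Phi^1(\Psi_1)\bigr) = d^1(\Psi_1)$, so the two infinitesimals represent the same class in $\mathrm{H}_{\mathrm{RBS}}^2(A)$.

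The only subtlety I anticipate is bookkeeping with the shift and the sign conventions in the mapping cone: one must be careful that the degree-$2$ part of $\mathrm{C}_{\mathrm{RBS}}^\bullet(A)$ is $\mathrm{C}_{\mathrm{Alg}}^2(A)\oplus\mathrm{C}_{\mathrm{RBSO}}^1(A)$ with $\mathrm{C}_{\mathrm{RBSO}}^1(A) \cong \mathrm{Hom}(A,A)\oplus\mathrm{Hom}(A,A)$, and that the cocycle condition from Proposition \ref{2-co} packages the $R$- and $S$-equations as $\Phi^2(\mu_1) = -\partial^1(R_1,S_1)$; once the identifications are fixed consistently, the sign $-\Phi^1(\Psi_1)$ in $d^1$ matches the sign produced by moving $\Psi_1\circ R$ and $R\circ\Psi_1$ to opposite sides of the linearized equation \eqref{rt}. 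No genuine obstacle arises beyond this; the computation is the Rota-Baxter-system analogue of the classical fact that equivalent deformations have cohomologous infinitesimals, and it parallels \cite[Proposition 6.2]{WZ} closely.
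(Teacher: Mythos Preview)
Your approach is essentially identical to the paper's: expand equations \eqref{mut}--\eqref{st} to first order in $t$ and identify the difference of the two infinitesimals as $d^1$ applied to $(\Psi_1,0,0)$. Two minor corrections: $\mathrm{C}_{\mathrm{RBS}}^1(A) = \mathrm{C}_{\mathrm{Alg}}^1(A)\oplus\mathrm{C}_{\mathrm{RBSO}}^0(A)$ is \emph{not} just $\mathrm{Hom}(A,A)$ (so one should write $d^1(\Psi_1,0,0)$ as the paper does, which is what your parenthetical actually acknowledges), and your signs are reversed throughout --- since $\Phi_R^1(\Psi_1)=\Psi_1\circ R - R\circ\Psi_1$, the computation yields $(\mu_1',R_1',S_1') - (\mu_1,R_1,S_1) = (\delta^1(\Psi_1),-\Phi^1(\Psi_1)) = d^1(\Psi_1,0,0)$, not the unprimed-minus-primed version --- though of course this does not affect the conclusion.
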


\begin{proof} Let $\Psi _t : (A[[t]], \mu_t^\prime, R_t^\prime, S_t^\prime) \to (A[[t]], \mu_t, R_t, S_t)$ be a formal isomorphism.  Expanding the identities and collecting coefficients of $t$, we get from Equations (\ref{mut})-(\ref{st})
\begin{gather*}
 \mu_1^\prime = \mu_1 + \mu \circ (Id\otimes \Psi_1) - \Psi_1\circ \mu + \mu \circ (\Psi_1\otimes Id),\\
 R_1^\prime = R_1 + R\circ \Psi_1 - \Psi_1\circ R,\\
 S_1^\prime = S_1 + S\circ \Psi_1 - \Psi_1\circ S.
\end{gather*}
That is, we have
\[(\mu_1^\prime, R_1^\prime, S_1^\prime)-(\mu_1,R_1,S_1)=(\delta^1(\Psi_1),-\Phi^1(\Psi_1))=d^1(\Psi_1,0,0)\in \mathrm{C}_{\mathrm{RBS}}^2(A).\]
\end{proof}

\begin{defini} A Rota-Baxter system $(A, \mu, R, S)$ is said to be rigid if every 1-parameter formal deformation is trivial.
\end{defini}

\begin{thm} Let $(A, \mu, R, S)$ be a Rota-Baxter system. If $\mathrm{H}_{\mathrm{RBS}}^2(A) = 0$, then $(A, \mu, R, S)$ is rigid.
\end{thm}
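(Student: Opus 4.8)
The plan is to run the classical obstruction-killing induction on the order of the deformation, exactly as one does for associative algebras, but tracking the two operators as well. Let $(A[[t]],\mu_t,R_t,S_t)$ be a $1$-parameter formal deformation of $(A,\mu,R,S)$. Call it \emph{$N$-trivial} if $\mu_i=R_i=S_i=0$ for all $1\leqslant i\leqslant N-1$; every deformation is $1$-trivial. I would show that every $N$-trivial deformation is equivalent to an $(N+1)$-trivial one, and then pass to the $t$-adic limit. The first step is to observe that for an $N$-trivial deformation the coefficient of $t^N$ in Equations (\ref{mu1}), (\ref{R1}) and (\ref{S1}) simplifies drastically: all summands whose indices are not in $\{0,N\}$ vanish, and the three resulting identities become verbatim the identities in the proof of Proposition \ref{2-co} with the index $1$ replaced by $N$. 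Hence $\delta^2(\mu_N)=0$ and $\Phi^2(\mu_N)=-\partial^1(R_N,S_N)$, so $(\mu_N,(R_N,S_N))$ is a $2$-cocycle in $\mathrm{C}_{\mathrm{RBS}}^\bullet(A)$ by formula (\ref{differential}).

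Next I would use $\mathrm{H}_{\mathrm{RBS}}^2(A)=0$ to write this $2$-cocycle as $d^1(\psi,0,0)$ for some $\psi\in\mathrm{C}_{\mathrm{Alg}}^1(A)=\mathrm{Hom}(A,A)$, i.e. $\mu_N=\delta^1(\psi)$ and $(R_N,S_N)=-\Phi^1(\psi)$, and then set $\Psi_t:=\mathrm{Id}_A-\psi t^N\colon A[[t]]\to A[[t]]$ and transport the deformation along it, defining $\mu_t':=\Psi_t^{-1}\circ\mu_t\circ(\Psi_t\otimes\Psi_t)$, $R_t':=\Psi_t^{-1}\circ R_t\circ\Psi_t$ and $S_t':=\Psi_t^{-1}\circ S_t\circ\Psi_t$. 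This $(\mu_t',R_t',S_t')$ is again a formal deformation of $(A,\mu,R,S)$, equivalent to the original one with $\Psi_t$ the witnessing formal isomorphism; expanding in powers of $t$ exactly as in the proof of the theorem on infinitesimals of equivalent deformations (now at order $N$, where $N$-triviality lets us ignore the lower-order parts of $\Psi_t$), one finds that the orders $1,\dots,N-1$ still vanish while the order-$N$ term becomes $(\mu_N,(R_N,S_N))-d^1(\psi,0,0)=0$. So $(\mu_t',R_t',S_t')$ is $(N+1)$-trivial.

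Finally, starting from the given ($1$-trivial) deformation and applying the previous step for $N=1,2,3,\dots$ produces formal isomorphisms $\Psi_t^{(N)}=\mathrm{Id}_A-\psi_N t^N$, each differing from $\mathrm{Id}_A$ only in orders $\geqslant N$; I would take their infinite composite $\cdots\circ\Psi_t^{(2)}\circ\Psi_t^{(1)}$, which converges in the $t$-adic topology to a well-defined formal isomorphism carrying $(\mu_t,R_t,S_t)$ to the trivial deformation, proving rigidity.

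Of the steps above, the triple-sum bookkeeping (which mirrors the $N=1$ case of Proposition \ref{2-co}) and the $t$-adic convergence of the infinite composite are routine. The delicate point---the one I expect to be the real obstacle---is the very first move of the second paragraph: $\mathrm{H}_{\mathrm{RBS}}^2(A)=0$ only guarantees $(\mu_N,(R_N,S_N))=d^1(\psi,(x,y))$ for \emph{some} $\psi\in\mathrm{Hom}(A,A)$ and \emph{some} $(x,y)\in\mathrm{C}_{\mathrm{RBSO}}^0(A)$, whereas a genuine formal isomorphism only ever changes the order-$N$ term by an element of $d^1\!\bigl(\mathrm{C}_{\mathrm{Alg}}^1(A)\oplus 0\bigr)$. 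One must therefore show that the $\mathrm{C}_{\mathrm{RBSO}}^0(A)$-contribution $-\partial^0(x,y)$ can be absorbed into the $\mathrm{Hom}(A,A)$-slot---equivalently, that $\partial^0(x,y)$ equals $\Phi^1(D)$ for some Hochschild $1$-cocycle $D$---so that the obstruction is in fact a coboundary of the restricted shape $d^1(\psi',0,0)$; handling this compatibility between the cohomology computed by the mapping cone and the equivalences realized by formal isomorphisms is the crux of the argument.
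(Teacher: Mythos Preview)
Your plan is the paper's proof step for step: show the leading nonzero coefficient $(\mu_N,(R_N,S_N))$ is a $2$-cocycle (the computation of Proposition~\ref{2-co} with $1$ replaced by $N$), bound it, kill it with $\Psi_t^{(N)}=\mathrm{Id}_A-\psi_N t^N$, and take the $t$-adic limit of the composite.

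On the crux you isolate, the paper's device is precisely the chain-map identity $\partial^0\circ\Phi^0=\Phi^1\circ\delta^0$: writing the bounding $1$-cochain as $(\Psi_1',(f,f))$ one has $\partial^0(f,f)=\partial^0(\Phi^0(f))=\Phi^1(\delta^0(f))$, so setting $\Psi_1:=\Psi_1'+\delta^0(f)$ gives $d^1(\Psi_1',(f,f))=d^1(\Psi_1,(0,0))$, exactly the restricted coboundary shape a formal isomorphism realizes. Note, however, that the paper simply asserts the bounding cochain can be taken with equal $\mathrm{C}_{\mathrm{RBSO}}^0$-components $(f,f)$; a general element of $\mathrm{C}_{\mathrm{RBSO}}^0(A)\cong\mathrm{Hom}(\mathbb{K},A)^{\oplus 2}$ has the form $(f,g)$ with $f\neq g$, and for such cochains the chain-map trick does not directly apply. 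So your diagnosis that this absorption step is the genuine obstacle is on target---it is exactly the place where the paper's own argument is thinnest.
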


\begin{proof}
Let $(A[[t]], \mu_t, R_t, S_t)$ be a 1-parameter formal deformation of $(A, \mu, R, S)$. By Proposition \ref{2-co}, $(\mu_1, R_1, S_1)$ is a 2-cocycle. By $\mathrm{H}_{\mathrm{RBS}}^2(A) = 0$, there exists a 1-cochain
\[(\Psi_1^\prime,f,f)\in \mathrm{C}_{\mathrm{RBS}}^1(A)=\mathrm{C}_{\mathrm{Alg}}^1(A)\oplus({ \rm Hom}(\mathbb{K},A)\oplus { \rm Hom}(\mathbb{K},A)) \]
such that $(\mu_1, R_1, S_1)=d^1(\Psi_1^\prime,(f,f))$, that is, $\mu_1=\delta^1(\Psi_1^\prime)$ and $(R_1, S_1)= -\partial^0(f,f)-\Phi^1(\Psi_1^\prime)$. Let $\Psi_1=\Psi_1^\prime+\delta^0(f)$. Since $\Phi^1(\delta^0(f))=\partial^0(\Phi^0(f))=\partial^0(f,f)$, then
\[\mu_1=\delta^1(\Psi_1)=\mu\circ(Id\otimes\Psi_1)-\Psi_1\circ\mu+\mu\circ(\Psi_1\otimes Id),\]
and
\[ (R_1, S_1)=-\Phi^1(\Psi_1)=(-\Psi_1\circ R+R\circ\Psi_1,-\Psi_1\circ S+S\circ\Psi_1).\]
Setting $\Psi_t^{(1)}=Id_A-\Psi_1t:A[[t]]\to A[[t]]$, then $(\Psi_t^{(1)})^{-1}=Id_A+\Psi_1t+\Psi_2t^2+\cdots $, and we have a deformation $(A[[t]], \mu_t^{(1)}, R_t^{(1)}, S_t^{(1)})$, where
\begin{gather*}
\mu_t^{(1)} = (\Psi_t^{(1)})^{-1}\circ \mu_t\circ (\Psi_t^{(1)}\otimes \Psi_t^{(1)}),\\
  R_t^{(1)}= (\Psi_t^{(1)})^{-1}\circ R_t\circ \Psi_t^{(1)},\\
 S_t^{(1)} = (\Psi_t^{(1)})^{-1}\circ S_t\circ \Psi_t^{(1)}.
\end{gather*}
So we have
\begin{gather*}
\mu_1^{(1)}=\Psi_1\circ\mu+\mu_1+\mu(-\Psi_1\otimes Id)+\mu(Id\otimes(-\Psi_1))=0,\\
 R_1^{(1)} = \Psi_1\circ R+R_1+R\circ (-\Psi_1)=0,\\
 S_1^{(1)} = \Psi_1\circ S+S_1+S\circ (-\Psi_1)=0.
\end{gather*}
Then
\begin{gather*}
  \mu_t^{(1)} = \mu+\mu_2^{(1)}t^2+\cdots,\\
  R_t^{(1)}=R+R_2^{(1)}t^2+\cdots,\\
   S_t^{(1)} =S+ S_2^{(1)}t^2+\cdots.
\end{gather*}
Assume there exists $\Psi_t^{(1)}=Id_A-\Psi_1t,\cdots,\Psi_t^{(n)}=Id_A-\Psi_nt^n$ such that for
\begin{gather*}
\mu_t^{(n)} = (\Psi_t^{(n)})^{-1}\circ\cdots \circ(\Psi_t^{(1)})^{-1}\circ \mu_t\circ \Bigl((\Psi_t^{(1)}\circ\cdots \circ\Psi_t^{(n)})\otimes (\Psi_t^{(1)}\circ\cdots \circ\Psi_t^{(n)})\Bigr),\\
  R_t^{(n)}= (\Psi_t^{(n)})^{-1}\circ\cdots \circ(\Psi_t^{(1)})^{-1}\circ R_t\circ (\Psi_t^{(1)}\circ\cdots \circ\Psi_t^{(n)}),\\
 S_t^{(n)} = (\Psi_t^{(n)})^{-1}\circ\cdots \circ(\Psi_t^{(1)})^{-1}\circ S_t\circ (\Psi_t^{(1)}\circ\cdots \circ\Psi_t^{(n)}),
\end{gather*}
we have
 \begin{gather*}
   \mu_1^{(n)}=\cdots =\mu_n^{(n)} =0,\\
   R_1^{(n)}=\cdots =R_n^{(n)} =0,\\
   S_1^{(n)}=\cdots =S_n^{(n)} =0.
 \end{gather*}
By Equations (\ref{R1}) and (\ref{S1}) , $\Bigl(\mu_{n+1}^{(n)},(R_{n+1}^{(n)},S_{n+1}^{(n)})\Bigr)$ is a 2-cocycle in $\mathrm{C}_{\mathrm{RBS}}^\bullet(A)$. Then there exists $\Psi_{n+1}\in { \rm Hom}(A,A)$ such that $d^1\Bigl(\Psi_{n+1},(0,0)\Bigr)=\Bigl(\mu_{n+1}^{(n)},(R_{n+1}^{(n)},S_{n+1}^{(n)})\Bigr)$. Let $\Psi_t^{(n+1)}=Id_A-\Psi_{n+1}t^{n+1}$, and
 \begin{gather*}
\mu_t^{(n+1)} = (\Psi_t^{(n+1)})^{-1}\circ \mu_t^{(n)}\circ (\Psi_t^{(n+1)}\otimes \Psi_t^{(n+1)}),\\
  R_t^{(n+1)}= (\Psi_t^{(n+1)})^{-1}\circ R_t^{(n)}\circ \Psi_t^{(n+1)},\\
 S_t^{(n+1)} = (\Psi_t^{(n+1)})^{-1}\circ S_t^{(n)}\circ \Psi_t^{(n+1)},
\end{gather*}
then
\begin{gather*}
   \mu_1^{(n+1)}=\cdots =\mu_{n+1}^{(n+1)} =0,\\
   R_1^{(n+1)}=\cdots =R_{n+1}^{(n+1)} =0,\\
   S_1^{(n+1)}=\cdots =S_{n+1}^{(n+1)} =0.
 \end{gather*}
Define $\Psi_t=\Psi_t^{(1)}\circ\Psi_t^{(2)}\circ\cdots \circ\Psi_t^{(n)}\circ\cdots:A[[t]]\to A[[t]],$ and
 \begin{gather*}
\mu_t^\prime = \Psi_t^{-1}\circ \mu_t\circ (\Psi_t\otimes \Psi_t),\\
  R_t^\prime= \Psi_t^{-1}\circ R_t\circ \Psi_t,\\
 S_t^\prime = \Psi_t^{-1}\circ S_t\circ \Psi_t.
\end{gather*}
Since
\begin{gather*}
\Psi_t=\Psi_t^{(1)}\circ\Psi_t^{(2)}\circ\cdots \circ\Psi_t^{(n)} \text{mod} (t^{n+1}),\\
(\Psi_t)^{-1}=(\Psi_t^{(n)})^{-1}\circ\cdots \circ(\Psi_t^{(1)})^{-1}  \text{mod}(t^{n+1}),\\
\mu_t^\prime = \mu_t^{(n)} \text{mod} (t^{n+1}),\\
  R_t^\prime= R_t^{(n)}\text{mod} (t^{n+1}),\\
 S_t^\prime = S_t^{(n)} \text{mod}(t^{n+1}),
\end{gather*}
we have
\begin{gather*}
   \mu_1^\prime=\cdots =\mu_{n+1}^\prime =0,\\
   R_1^\prime=\cdots =R_{n+1}^\prime =0,\\
   S_1^\prime=\cdots =S_{n+1}^\prime =0,
 \end{gather*}
for all $n\geqslant 1$. \\
Hence $\mu_t^\prime = \mu, R_t^\prime= R$ and $S_t^\prime = S$. So $(A[[t]], \mu_t, R_t, S_t)$ is equivalant to $(A[[t]], \mu, R, S)$, that is,  $(A, \mu, R, S)$ is rigid.
\end{proof}

\subsection{Formal deformations of Rota-Baxter system operators with multiplication fixed}
Let $(A, \mu = \cdot ,R,S)$ be a Rota-Baxter system. Let us consider the case where we only
deform the Rota-Baxter system operators with the multiplication fixed. So $A[[t]]=\{\sum _{i=0}^\infty a_it^i | a_i\in A, \forall i\geqslant 0\}$ is endowed with
the multiplication induced from that of $A$, say,
\[
(\sum _{i=0}^\infty a_i t^i )(\sum _{j=0}^\infty b_j t^j )=\sum _{n=0}^\infty (\sum_{\substack {i+j=n \\
i,j\geqslant 0}}
 a_i b_j)t^n.
\]
Then $A[[t]]$ becomes a free (???) $\mathbb{K}[[t]]$-algebra, whose  multiplication is still denoted by $\mu$.
In this case, a 1-parameter formal deformation $(\mu_t,R_t,S_t)$ of Rota-Baxter system $(A, \mu ,R,S)$ satisfies $\mu_i=0,\forall i\geqslant 1 $.
So Equation (\ref{mu1}) degenerates and Equations (\ref{R1})(\ref{S1}) become
\begin{gather*}
  \mu\circ(R_t\otimes R_t) = R_t\circ \mu(R_t\otimes Id + Id\otimes S_t), \\
  \mu\circ(S_t\otimes S_t) = S_t\circ \mu(R_t\otimes Id + Id\otimes S_t).
\end{gather*}
Expanding these equations and comparing the coefficients of $t^n$, we obtain that $\{R_i\}_{i\geqslant 0}$, $\{S_i\}_{i\geqslant 0}$ have to
satisfy: for any $n\geqslant 0$,
\begin{gather}
  \sum_{\substack{i+j=n \\
  i,j\geqslant0}}
  \mu\circ (R_i\otimes R_j) = \sum_{\substack{i+j=n \\
  i,j\geqslant0}}
  R_i\circ \mu\circ (R_j\otimes Id) + \sum_{\substack{i+j=n \\
  i,j\geqslant0}}
  R_i\circ \mu\circ (Id\otimes S_j),\label{r0}\\
  \sum_{\substack{i+j=n \\
  i,j\geqslant0}}
  \mu\circ (S_i\otimes S_j) = \sum_{\substack{i+j=n \\
  i,j\geqslant0}}
  S_i\circ \mu\circ (R_j\otimes Id) + \sum_{\substack{i+j=n \\
  i,j\geqslant0}}
  S_i\circ \mu\circ (Id\otimes S_j).\label{s0}
\end{gather}

Obviously, when $n=0$, Equations (\ref{r0})(\ref{s0}) become exactly Equations (\ref{3})(\ref{4}) defining Rota-Baxter system operators $(R=R_0, S=S_0)$.\\
When $n = 1$, Equations (\ref{r0})(\ref{s0}) has the form
\begin{align*}
 & \mu\circ (R\otimes R_1 +R_1\otimes R) \\
 ={} & R\circ \mu\circ (R_1\otimes Id)+ R_1\circ \mu\circ (R\otimes Id+Id\otimes S)+ R\circ \mu\circ (Id\otimes S_1),\\
 & \mu\circ (S\otimes S_1 +S_1\otimes S) \\
 ={} & S\circ \mu\circ (R_1\otimes Id)+ S_1\circ \mu\circ (R\otimes Id+Id\otimes S)+ S\circ \mu\circ (Id\otimes S_1),
\end{align*}
which say exactly that $\partial^1(R_1,S_1)=(0,0)\in \mathrm{C}_{\mathrm{RBSO}}^\bullet(A)$, where $\mathrm{C}_{\mathrm{RBSO}}^\bullet(A)$ is the cochain complex of Rota-Baxter system operators $(R,S)$ defined in last section. This proves the following result:

\begin{prop} Let $R_t,S_t$ be a 1-parameter formal deformation of Rota-Baxter system operators $(R,S)$. Then  $(R_1,S_1)$ is a 1-cocycle in the cochain complex
$\mathrm{C}_{\mathrm{RBSO}}^\bullet(A)$.
\end{prop}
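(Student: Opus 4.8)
The plan is to read off the cocycle condition directly from the degree-one part of the deformation equations; this is essentially the computation indicated in the paragraph before the statement, and below I describe how I would organise it. Since only the operators are being deformed, Equation (\ref{mu1}) is vacuous and it suffices to work with (\ref{r0}) and (\ref{s0}). First I would specialise these to $n=1$: using $R_0=R$, $S_0=S$ and combining the two summands that carry $R_1$ (resp. $S_1$), the index pairs $(i,j)=(0,1),(1,0)$ yield
\begin{align*}
\mu\circ(R\otimes R_1+R_1\otimes R) &= R\circ\mu\circ(R_1\otimes Id)+R_1\circ\mu\circ(R\otimes Id+Id\otimes S)+R\circ\mu\circ(Id\otimes S_1),\\
\mu\circ(S\otimes S_1+S_1\otimes S) &= S\circ\mu\circ(R_1\otimes Id)+S_1\circ\mu\circ(R\otimes Id+Id\otimes S)+S\circ\mu\circ(Id\otimes S_1).
\end{align*}

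Next I would compare these with the equation $\partial^1(R_1,S_1)=(0,0)$ in the cochain complex $\mathrm{C}_{\mathrm{RBSO}}^\bullet(A)$ of Rota-Baxter system operators $(R,S)$. I would unwind
\[
\partial^1(R_1,S_1)(a_1\otimes a_2)=a_1\vartriangleright(R_1,S_1)(a_2)-(R_1,S_1)(a_1\star a_2)+(R_1,S_1)(a_1)\vartriangleleft a_2,
\]
using $a_1\star a_2=R(a_1)a_2+a_1S(a_2)$ and the twisted actions $\vartriangleright,\vartriangleleft$ on $\mathscr{D}(A)=A\oplus A$ from Proposition \ref{triangle} (with $M=A$, $R_M=R$, $S_M=S$). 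Its first component comes out as
\[
R(a_1)R_1(a_2)+R_1(a_1)R(a_2)-R\bigl(R_1(a_1)a_2\bigr)-R_1\bigl(R(a_1)a_2+a_1S(a_2)\bigr)-R\bigl(a_1S_1(a_2)\bigr),
\]
and an entirely parallel computation gives its second component as
\[
S(a_1)S_1(a_2)+S_1(a_1)S(a_2)-S\bigl(R_1(a_1)a_2\bigr)-S_1\bigl(R(a_1)a_2+a_1S(a_2)\bigr)-S\bigl(a_1S_1(a_2)\bigr).
\]
Evaluating the two displayed deformation identities at $a_1\otimes a_2$ and transposing every term to the left, one checks that the first identity is precisely the vanishing of the first expression above and the second identity the vanishing of the second; hence $\partial^1(R_1,S_1)=(0,0)$, which is the assertion.

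I do not anticipate any genuine obstacle: the proof is a finite comparison of coefficients. The one point requiring attention, because $R$ and $S$ are interlocked, is the bookkeeping of the mixed terms. Inside $R_1$ (and inside $S_1$) one meets the combination $R(a_1)a_2+a_1S(a_2)$ coming from $a_1\star a_2$; and the twisted right action $\vartriangleleft$ on $\mathscr{D}(A)$ contributes $R\bigl(R_1(a_1)a_2\bigr)$ in the first slot and $S\bigl(R_1(a_1)a_2\bigr)$ in the second --- note the \emph{same} inner argument $R_1(a_1)a_2$ in both slots --- which must be identified with the summands $R_0\circ\mu\circ(R_1\otimes Id)$ and $S_0\circ\mu\circ(R_1\otimes Id)$ of (\ref{r0}) and (\ref{s0}) respectively. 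Keeping track of which of $R$, $S$, $R_1$, $S_1$ stands outermost in each summand, and of the signs generated by the $\partial^1$ formula, is the whole content of the check.
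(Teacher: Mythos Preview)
Your proposal is correct and follows exactly the approach the paper takes: specialise Equations (\ref{r0}) and (\ref{s0}) to $n=1$ and identify the resulting two identities with the vanishing of $\partial^1(R_1,S_1)$. In fact you give more detail than the paper, which simply displays the $n=1$ identities and asserts that they ``say exactly that $\partial^1(R_1,S_1)=(0,0)$''; your explicit unwinding of $\partial^1$ via the actions $\vartriangleright,\vartriangleleft$ and the careful tracking of the mixed term $S\bigl(R_1(a_1)a_2\bigr)$ in the second component is precisely the bookkeeping the paper leaves to the reader.
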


This means that the cochain complex $\mathrm{C}_{\mathrm{RBSO}}^\bullet(A)$ controls formal deformations of Rota-Baxter system operators.

\section{Abelian extensions of Rota-Baxter systems}
In this section, we study abelian extensions of Rota-Baxter systems and show that they are
classified by the second cohomology, as one would expect of a good cohomology theory.
Notice that a vector space $M$ together with two linear transformations $R_M,S_M : M\to M$ is naturally a
Rota-Baxter system where the multiplication on $M$ is defined to be $uv = 0$ for all $u,v\in M$.

\begin{defini}
An abelian extension of Rota-Baxter systems is a short exact sequence of morphisms of Rota-Baxter systems
\begin{equation}\label{ses}
   \xymatrix@C=0.5cm{
    0 \ar[r] & (M,R_M,S_M)\ar[r]^i & (\hat{A},\hat{R},\hat{S}) \ar[r]^p & (A,R,S) \ar[r] & 0 },
\end{equation}
that is, there exist two commutative diagrams:
$$\xymatrix@C=0.5cm{
    0 \ar[r] & M  \ar[r]^i \ar[d]_{R_M} & \hat{A} \ar[r]^p \ar[d]^{\hat{R}} & A \ar[r] \ar[d]^R & 0 \quad &0 \ar[r] & M  \ar[r]^i \ar[d]_{S_M} & \hat{A} \ar[r]^p \ar[d]^{\hat{S}} & A \ar[r] \ar[d]^S & 0 \\
    0 \ar[r] & M \ar[r]^i  & \hat{A} \ar[r]^p  & A \ar[r]  & 0, \quad & 0 \ar[r] & M \ar[r]^i  & \hat{A} \ar[r]^p  & A \ar[r]  & 0,}$$
where the Rota-Baxter system $(M,R_M,S_M)$ satisfies $uv = 0$ for all $u,v\in M$.
\end{defini}

We will call $(\hat{A},\hat{R},\hat{S})$ an abelian extension of $(A,R,S)$ by $(M,R_M,S_M)$.

\begin{defini}
Let $(\hat{A_1},\hat{R_1},\hat{S_1})$ and $(\hat{A_2},\hat{R_2},\hat{S_2})$ be two abelian extensions of $(A,R,S)$ by $(M,R_M,S_M)$. They are said to be isomorphic if there exists an isomorphism of Rota-Baxter systems $\zeta: (\hat{A_1},\hat{R_1},\hat{S_1})\to (\hat{A_2},\hat{R_2},\hat{S_2}) $ such that the following commutative diagram holds:
\begin{equation}
\begin{gathered}\label{iso}
\xymatrix@C=0.5cm{
    0 \ar[r] & (M,R_M,S_M)\ar[r]^{i_1} \ar@{=}[d] & (\hat{A_1},\hat{R_1},\hat{S_1}) \ar[r]^{p_1} \ar[d]_{\zeta} & (A,R,S) \ar[r] \ar@{=}[d] & 0 \\
    0 \ar[r] & (M,R_M,S_M)\ar[r]^{i_2} & (\hat{A_2},\hat{R_2},\hat{S_2}) \ar[r]^{p_2} & (A,R,S) \ar[r] & 0.
    }
\end{gathered}
\end{equation}
\end{defini}

A section of an abelian extension $(\hat{A},\hat{R},\hat{S})$ of $(A,R,S)$ by $(M,R_M,S_M)$ is a linear map $t: A\to \hat{A}$ such that $p\circ t=\mathrm{Id}_A$.\\
We will show that isomorphism classes of abelian extensions of $(A,R,S)$ by $(M,R_M,S_M)$ are in bijection with the second cohomology group $\mathrm{H}_{\mathrm{RBS}}^2(A,M)$.\\
Let $(\hat{A},\hat{R},\hat{S})$ be an abelian extension of $(A,R,S)$ by $(M,R_M,S_M)$ having the form Equation (\ref{ses}). Choose a section $t: A\to \hat{A}$. We define
\[am:=t(a)m,\quad ma:=mt(a), \quad \forall a\in A,m\in M.\]

\begin{prop}\label{bimod}
With the above notations, $(M,R_M,S_M)$ is a Rota-Baxter system bimodule over  $(A,R,S)$.
\end{prop}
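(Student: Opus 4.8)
The plan is to view $M$ as a square-zero ideal of $\hat A$ and to push the Rota-Baxter system identities of $\hat A$ down onto $M$. Since $i$ is injective we identify $M$ with $i(M)\subseteq\hat A$; as $p$ is an algebra morphism and the multiplication on $M$ is trivial, $i(M)=\ker p$ is a two-sided ideal of $\hat A$ satisfying $M\cdot M=0$. I would first record the two elementary facts that drive everything. (a) From the commuting squares relating $R_M,\hat R,R$ and $S_M,\hat S,S$ we get $\hat R|_M=R_M$ and $\hat S|_M=S_M$. (b) From $p\circ\hat R=R\circ p$ and $p\circ t=\mathrm{Id}_A$ we get $p\bigl(\hat R(t(a))\bigr)=R(a)=p\bigl(t(R(a))\bigr)$, hence $\hat R(t(a))-t(R(a))\in M$, and likewise $\hat S(t(a))-t(S(a))\in M$.

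Before dealing with the operators, I would check that $a\cdot m:=t(a)m$ and $m\cdot a:=mt(a)$ make $M$ an $A$-bimodule: for example $(ab)\cdot m - a\cdot(b\cdot m)=\bigl(t(ab)-t(a)t(b)\bigr)m$, which vanishes because $t(ab)-t(a)t(b)\in M$ (as $p$ is multiplicative) and $M\cdot M=0$; the identity $m\cdot(ab)=(m\cdot a)\cdot b$ is symmetric, and $(a\cdot m)\cdot b=a\cdot(m\cdot b)$ is just associativity in $\hat A$. The same mechanism shows that multiplying $\hat R(t(a))$ (resp. $\hat S(t(a))$) by an element of $M$ on either side agrees with the bimodule action: $\hat R(t(a))\,m=t(R(a))m=R(a)\cdot m$ and $m\,\hat R(t(a))=m\cdot R(a)$, because the correction term from (b) lies in $M$ and is annihilated; analogously for $\hat S$.

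Finally I would substitute the four pairs $(x,y)\in\{(t(a),m),(m,t(a))\}$, with $a\in A$ and $m\in M$, into the two Rota-Baxter system equations valid in $\hat A$, namely $\hat R(x)\hat R(y)=\hat R\bigl(\hat R(x)y+x\hat S(y)\bigr)$ and $\hat S(x)\hat S(y)=\hat S\bigl(\hat R(x)y+x\hat S(y)\bigr)$. Using (a), (b) and $M\cdot M=0$, the choice $(x,y)=(t(a),m)$ in the first equation collapses to $R(a)R_M(m)=R_M\bigl(R(a)m+aS_M(m)\bigr)$; the choice $(x,y)=(m,t(a))$ in the first equation gives $R_M(m)R(a)=R_M\bigl(R_M(m)a+mS(a)\bigr)$; and the same two substitutions in the second equation yield $S(a)S_M(m)=S_M\bigl(R(a)m+aS_M(m)\bigr)$ and $S_M(m)S(a)=S_M\bigl(R_M(m)a+mS(a)\bigr)$ respectively. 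These are exactly the four defining relations of a Rota-Baxter system bimodule, so $(M,R_M,S_M)$ is one. There is no genuine obstacle here; the only point requiring care is that $t$ need not intertwine $\hat R$ with $R$ (nor $\hat S$ with $S$) on the nose — only modulo $M$ — so one must consistently invoke $M\cdot M=0$ to discard the correction terms, after which every line is a routine computation.
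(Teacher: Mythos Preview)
Your proof is correct and follows essentially the same approach as the paper: both use that $M$ is a square-zero ideal, that $\hat R|_M=R_M$, $\hat S|_M=S_M$, and that $\hat R(t(a))-t(R(a))$, $\hat S(t(a))-t(S(a))\in M$, and then plug $(t(a),m)$ and $(m,t(a))$ into the Rota-Baxter system identities of $\hat A$. Your write-up is slightly more explicit about the role of $M\cdot M=0$ in killing the correction terms, but the method is identical.
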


\begin{proof}
For any $a,b\in A, m\in M$, since $t(ab)-t(a)t(b)\in M$ implies $t(ab)m=t(a)t(b)m$, we have
\[(ab)m=t(ab)m=t(a)t(b)m=a(bm).\]
Hence, this gives a left $A$-module structure and the case of right module structure is similar.

Moreover, $\hat{R}(t(a))-t(R(a))\in M$ means that $\hat{R}(t(a))m=t(R(a))m$. Thus we have
\begin{align*}
  R(a)R_M(m)={} & t(R(a))R_M(m) \\
  ={} & \hat{R}(t(a))R_M(m) \\
  ={} & R_M(\hat{R}(t(a))m+t(a)S_M(m))\\
  ={} & R_M(R(a)m+aS_M(m)).
\end{align*}
It is similar to see $$R_M(m)R(a)=R_M(R_M(m)a+mS(a)),$$ $$S(a)S_M(m)=S_M(R(a)m+aS_M(m)),$$ $$S_M(m)S(a)=S_M(R_M(m)a+mS(a)).$$
Hence, $(M,R_M,S_M)$ is a Rota-Baxter system bimodule over  $(A,R,S)$.
\end{proof}

We further define linear maps $\Psi : A \otimes A \to M$ and $\chi_R, \chi_S  : A \to M$  respectively by
\begin{align*}
  \Psi(a\otimes b)={} & t(a)t(b)-t(ab), \quad \forall a,b\in A, \\
  \chi_R(a)={} & \hat{R}(t(a))-t(R(a)),\quad \forall a\in A,\\
  \chi_S(a)={} & \hat{S}(t(a))-t(S(a)), \quad \forall a\in A.
\end{align*}

\begin{prop}
The triple $(\Psi, \chi_R, \chi_S)$ is a 2-cocycle of Rota-Baxter system $(A, R, S)$ with coefficients in
the Rota-Baxter system bimodule $(M, R_M, S_M)$ introduced in Proposition \ref{bimod}.
\end{prop}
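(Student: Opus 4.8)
The plan is to unwind what "2-cocycle" means for the complex $\mathrm{C}_{\mathrm{RBS}}^\bullet(A,M)$ and then to read off the required identities from the structure equations of $(\hat A,\hat R,\hat S)$. Using the section $t$, identify $\hat A$ with $t(A)\oplus M$ as a vector space; then by the very definitions of $\Psi,\chi_R,\chi_S$ and by commutativity of the diagrams defining an abelian extension one has, for all $a,b\in A$ and $m\in M$,
\[ t(a)t(b)=t(ab)+\Psi(a\otimes b),\quad t(a)m=am,\quad mt(a)=ma,\quad mn=0, \]
\[ \hat R(t(a))=t(R(a))+\chi_R(a),\quad \hat R(m)=R_M(m),\quad \hat S(t(a))=t(S(a))+\chi_S(a),\quad \hat S(m)=S_M(m). \]
By the formula \eqref{differential} for $d$ and the decomposition $\mathrm{C}_{\mathrm{RBS}}^2(A,M)=\mathrm{C}_{\mathrm{Alg}}^2(A,M)\oplus\mathrm{C}_{\mathrm{RBSO}}^1(A,M)$, showing that $(\Psi,(\chi_R,\chi_S))$ is a $2$-cocycle amounts to verifying the two identities
\[ \delta^2(\Psi)=0 \qquad\text{and}\qquad \partial^1(\chi_R,\chi_S)+\Phi^2(\Psi)=0. \]

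The first identity is the classical statement that the factor set of an associative algebra extension is a Hochschild $2$-cocycle. I would obtain it by expanding the associativity law $\bigl(t(a)t(b)\bigr)t(c)=t(a)\bigl(t(b)t(c)\bigr)$ in $\hat A$ using the multiplication formulas above, cancelling the common term $t\bigl((ab)c\bigr)=t\bigl(a(bc)\bigr)$, and using $MM=0$ together with the bimodule actions $am,ma$ from Proposition \ref{bimod}; the leftover $M$-valued equation is exactly $-a\Psi(b\otimes c)+\Psi(ab\otimes c)-\Psi(a\otimes bc)+\Psi(a\otimes b)c=0$.

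For the second identity I would evaluate the two defining equations of the Rota-Baxter system $(\hat A,\hat R,\hat S)$, namely $\hat R(u)\hat R(v)=\hat R\bigl(\hat R(u)v+u\hat S(v)\bigr)$ and $\hat S(u)\hat S(v)=\hat S\bigl(\hat R(u)v+u\hat S(v)\bigr)$, at $u=t(a)$, $v=t(b)$. Expanding both sides with the formulas above, using $\hat R|_M=R_M$, $\hat S|_M=S_M$, $MM=0$, and then cancelling the leading terms $t\bigl(R(a)R(b)\bigr)=t\bigl(R(R(a)b+aS(b))\bigr)$ and $t\bigl(S(a)S(b)\bigr)=t\bigl(S(R(a)b+aS(b))\bigr)$ via Equations \eqref{3}, \eqref{4} in $A$, one is left with two $M$-valued identities. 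Comparing these with the explicit formula for $\partial^1$ on $\mathrm{C}_{\mathrm{RBSO}}^1(A,M)$ — which is built from the $A_\star$-actions $\vartriangleright,\vartriangleleft$ on $\mathscr{D}(M)=M\oplus M$ of Proposition \ref{triangle} — and with the formulas for $\Phi_R^2,\Phi_S^2$, one checks that the first ($R$-)identity is precisely the first component of $\partial^1(\chi_R,\chi_S)(a\otimes b)+\Phi_R^2(\Psi)(a\otimes b)=0$ and the second is the $S$-component. The main obstacle is purely the bookkeeping: matching, term by term and with correct signs, the output of the abstract differentials $\partial^1$ and $\Phi^2$ against the concrete expansions coming from $\hat A$; once the dictionary $\hat A\cong t(A)\oplus M$ is fixed this is a routine, if lengthy, verification. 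Combining the two verified identities with \eqref{differential} gives $d^2\bigl(\Psi,(\chi_R,\chi_S)\bigr)=0$, which is the assertion.
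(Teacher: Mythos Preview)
Your proposal is correct and follows essentially the same route as the paper: both reduce the claim via \eqref{differential} to the two identities $\delta^2(\Psi)=0$ and $\Phi^2(\Psi)+\partial^1(\chi_R,\chi_S)=0$, and both obtain these by expanding, respectively, the associativity and the Rota--Baxter system axioms of $\hat A$ on elements $t(a),t(b),t(c)$ (resp.\ $t(a),t(b)$), using $\hat R|_M=R_M$, $\hat S|_M=S_M$, $MM=0$ and cancelling the $t(\cdot)$-terms via the corresponding identities in $A$. The paper simply carries out explicitly the bookkeeping you describe, additionally recording the convenient vanishing $(\hat R(t(a))-t(R(a)))(t(R(b))-\hat R(t(b)))=0$ (and its $S$-analogue) coming from $MM=0$ to streamline the final comparison.
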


\begin{proof}
Since $d(\Psi, \chi_R, \chi_S)=(\delta^2(\Psi),-\Phi^2(\Psi)-\partial^1(\chi_R, \chi_S))$, we just need to prove $$\delta^2(\Psi)=0,\quad \Phi^2(\Psi)+\partial^1(\chi_R, \chi_S)=0.$$

On the other hand, since $t\circ R-\hat{R}\circ t,\, t\circ S-\hat{S}\circ t\in M$, we have
\begin{align*}
  & t(R(a))\hat{R}(t(b))-R_M(t(a)\hat{S}(t(b))+\hat{R}(t(a))t(b))+(\hat{R}(t(a))-t(R(a)))t(R(b))\\
  ={} & (\hat{R}(t(a))-t(R(a)))(t(R(b))-\hat{R}(t(b)))=0,\\
  & t(S(a))\hat{S}(t(b))-S_M(t(a)\hat{S}(t(b))+\hat{R}(t(a))t(b))+(\hat{S}(t(a))-t(S(a)))t(S(b))\\
  ={} & (\hat{S}(t(a))-t(S(a)))(t(S(b))-\hat{S}(t(b)))=0.
\end{align*}
Furthermore, we have
\begin{align*}
   & \delta^2(\Psi)(a\otimes b\otimes c)\\
  ={} & -a\Psi(b\otimes c)+\Psi(ab\otimes c)-\Psi(a\otimes bc)+\Psi(a\otimes b)c \\
  ={} & -t(a)\bigl(t(b)t(c)-t(bc)\bigr)+\bigl(t(ab)t(c)-t(abc)\bigr)\\
  - & \bigl(t(a)t(bc)-t(abc)\bigr)+\bigl(t(a)t(b)-t(ab)\bigr)t(c)\\
  ={} & 0,
\\&
\\
  & \Phi^2(\Psi)(a\otimes b) \\
  ={} & \Bigl(\Psi(R(a)\otimes R(b))-R_M(\Psi(R(a)\otimes b+a\otimes S(b))),\\
  & \Psi(S(a)\otimes S(b))-S_M(\Psi(R(a)\otimes b+a\otimes S(b)))\Bigr)\\
   ={} & \Bigl(t(R(a))t(R(b))-t(R(a)R(b))-R_M(t(R(a))t(b)+t(a)t(S(b))-t(R(a)b+aS(b))),\\
  & t(S(a))t(S(b))-t(S(a)S(b))-S_M(t(R(a))t(b)+t(a)t(S(b))-t(R(a)b+aS(b)))\Bigr),
\\&
\\
  & \partial^1(\chi_R, \chi_S)(a\otimes b)\\
  ={} & \Bigl(R(a)\chi_R(b)-R_M(a\chi_S(b))-\chi_R(R(a)b+aS(b))+\chi_R(a)R(b)-R_M(\chi_R(a)b), \\
   & S(a)\chi_S(b)-S_M(a\chi_S(b))-\chi_S(R(a)b+aS(b))+\chi_S(a)S(b)-S_M(\chi_R(a)b)\Bigr)\\
   ={} & \Bigl(R(a)( \hat{R}(t(b))-t(R(b)))-R_M(a(\hat{S}(t(b))-t(S(b))))-\hat{R}(t(R(a)b+aS(b)))\\
   - & t(R(a)R(b))+(\hat{R}(t(a))-t(R(a)))R(b)-R_M((\hat{R}(t(a))-t(R(a)))b), \\
   & S(a)( \hat{S}(t(b))-t(S(b)))-S_M(a(\hat{S}(t(b))-t(S(b))))-\hat{S}(t(R(a)b+aS(b)))\\
   - & t(S(a)S(b))+(\hat{S}(t(a))-t(S(a)))S(b)-S_M((\hat{R}(t(a))-t(R(a)))b)\Bigr)\\
   ={} & -\Bigl(t(R(a))t(R(b))-t(R(a)R(b))-R_M(t(R(a))t(b)+t(a)t(S(b))-t(R(a)b+aS(b))),\\
  & t(S(a))t(S(b))-t(S(a)S(b))-S_M(t(R(a))t(b)+t(a)t(S(b))-t(R(a)b+aS(b)))\Bigr).
\end{align*}
This finishes the proof of our proposition.
\end{proof}

The choice of the section $t$ in fact determines a splitting
$$\xymatrix@C=0.5cm{
    0 \ar[r] & M \ar@<1ex>[r]^i & \hat{A} \ar@<1ex>[r]^p \ar@<1ex>[l]^s & A \ar[r] \ar@<1ex>[l]^t & 0
    }$$
subject to $s\circ i = \mathrm{Id}_M, s\circ t = 0$ and $is+tp=\mathrm{Id}_{\hat{A}}$. Then there is an induced isomorphism of vector spaces
\[\binom{\,p\, }{ s }:\hat{A}\cong A\oplus M :\Bigl(\; t \quad i\;\Bigr) .\]
We can transfer the Rota-Baxter system structure on $\hat{A}$ to $A\oplus M$ via this isomorphism. It is direct to verify that this endows $A\oplus M$ with a multiplication $\cdot_{\Psi}$ and two operators $R_{\chi},S_{\chi}$ defined by
\begin{gather}
  (a,m)\cdot_{\Psi}(b,n)=(ab,an+mb+\Psi(a,b)),\;\forall a,b\in A,\,m,n\in M,\label{a} \\
  R_{\chi}(a,m)=(R(a),\chi_R(a)+R_M(m)),\;\forall a\in A,\,m\in M,\label{b} \\
  S_{\chi}(a,m)=(S(a),\chi_S(a)+S_M(m)),\;\forall a\in A,\,m\in M.\label{c}
\end{gather}
Moreover, we get an abelian extension
$$ \xymatrix@C=0.5cm{
    0 \ar[r] & (M,R_M,S_M) \xrightarrow{\binom{\,0\,}{\mathrm{Id}_M}} & (A\oplus M,R_{\chi},S_{\chi})  \xrightarrow{(\,\mathrm{Id}_A\quad 0\,)} & (A,R,S) \ar[r] & 0 }$$
which is easily seen to be isomorphic to the original one (\ref{ses}).
\begin{prop}
  $\mathrm{(i)}$ Different choices of the section $t$ give the same Rota-Baxter system bimodule structures on $(M,R_M,S_M)$;

  $\mathrm{(ii)}$ the cohomological class of $(\Psi, \chi_R, \chi_S)$ does not depend on the choice of sections.
\end{prop}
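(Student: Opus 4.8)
The plan is to compare the data produced by two sections $t_1,t_2\colon A\to\hat A$ of the extension (\ref{ses}) by exploiting that their difference is a linear map valued in $M$ and that $M$ carries the zero multiplication.

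\emph{Part (i).} Set $\phi:=t_1-t_2$. Since $p\circ t_1=p\circ t_2=\mathrm{Id}_A$, we get $p\circ\phi=0$, so $\phi(a)\in\ker p=i(M)$ for every $a\in A$; identifying $M$ with $i(M)\subseteq\hat A$, I would view $\phi$ as a linear map $A\to M$. For $m\in M$ the products $\phi(a)m$ and $m\phi(a)$ formed in $\hat A$ are products of two elements of $i(M)$, hence vanish because $i$ is an algebra map and $uv=0$ holds in $M$. Therefore $t_1(a)m=t_2(a)m$ and $mt_1(a)=mt_2(a)$, so the actions $am=t(a)m$, $ma=mt(a)$ are independent of the section; as $R_M,S_M$ are part of the fixed data on $M$, the Rota-Baxter system bimodule structure of Proposition \ref{bimod} does not change with the section.

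\emph{Part (ii).} Keep $\phi=t_1-t_2\colon A\to M$, and for $j=1,2$ denote by $(\Psi_j,\chi_R^{(j)},\chi_S^{(j)})$ the $2$-cocycle attached to $t_j$. The main step is to substitute $t_1=t_2+\phi$ into the defining formulas $\Psi(a\otimes b)=t(a)t(b)-t(ab)$, $\chi_R(a)=\hat R(t(a))-t(R(a))$, $\chi_S(a)=\hat S(t(a))-t(S(a))$ and expand. The cross term $\phi(a)\phi(b)$ vanishes by $M\cdot M=0$, and by part (i) we may write $t_2(a)\phi(b)=a\phi(b)$, $\phi(a)t_2(b)=\phi(a)b$; this yields
\[\Psi_1(a\otimes b)-\Psi_2(a\otimes b)=a\phi(b)-\phi(ab)+\phi(a)b=\delta^1(\phi)(a\otimes b).\]
Likewise, the commutativity of the extension squares shows that $\hat R$ (resp.\ $\hat S$) restricts to $R_M$ (resp.\ $S_M$) on $M$, so $\hat R(\phi(a))=R_M(\phi(a))$ and $\hat S(\phi(a))=S_M(\phi(a))$, and hence
\[\chi_R^{(1)}(a)-\chi_R^{(2)}(a)=R_M(\phi(a))-\phi(R(a))=-\Phi_R^1(\phi)(a),\]
\[\chi_S^{(1)}(a)-\chi_S^{(2)}(a)=S_M(\phi(a))-\phi(S(a))=-\Phi_S^1(\phi)(a).\]
Recalling from the formula (\ref{differential}) that $d^1(\phi,(0,0))=\bigl(\delta^1(\phi),-\partial^0(0,0)-\Phi^1(\phi)\bigr)=\bigl(\delta^1(\phi),-\Phi^1(\phi)\bigr)$, I would conclude $(\Psi_1,\chi_R^{(1)},\chi_S^{(1)})-(\Psi_2,\chi_R^{(2)},\chi_S^{(2)})=d^1(\phi,(0,0))$ in $\mathrm{C}_{\mathrm{RBS}}^2(A,M)$, so the two cocycles are cohomologous in $\mathrm{H}_{\mathrm{RBS}}^2(A,M)$.

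The argument is essentially bookkeeping, and I do not expect a genuine obstacle. The points that require attention are: confirming that every term quadratic in $\phi$ disappears by $M\cdot M=0$; correctly using the compatibility of $\hat R,\hat S$ with $R_M,S_M$ on $M$; and — the one mildly delicate point — arranging the signs so that the difference of the two cocycles is exactly $d^1(\phi,(0,0))$ rather than its negative, which forces one to stay consistent with the conventions in the definitions of $\delta^\bullet$, $\Phi^\bullet$ and the mapping-cone differential $d^\bullet$.
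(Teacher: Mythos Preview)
Your proposal is correct and follows essentially the same route as the paper: the paper also sets $\gamma=t_1-t_2$, uses $M\cdot M=0$ to kill the quadratic term and to establish (i), and then computes $\Psi_1-\Psi_2=\delta^1(\gamma)$ and $\chi_{R,1}-\chi_{R,2}=-\Phi_R^1(\gamma)$ (similarly for $S$), concluding that the difference of the two cocycles is $d^1(\gamma,(0,0))$. Your version is in fact a bit more careful in two places where the paper is terse: you spell out why $\hat R|_M=R_M$ via the commutative square, and you write $d^1(\phi,(0,0))$ rather than the paper's shorthand $d^1(\gamma)$.
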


\begin{proof}
Let $t_1$ and $t_2$ be two distinct sections of $p$. We define $\gamma :A\to M$ by $\gamma (a)=t_1(a)-t_2(a)$.

Since the Rota-Baxter system $(M,R_M,S_M)$ satisfies $uv=0$ for all $u,v\in M$, we have
\[t_1(a)m=t_2(a)m+\gamma(a)m=t_2(a)m.\]
So different choices of the section $t$ give the same Rota-Baxter system bimodule structures on $(M,R_M,S_M)$.

Now, we show that the cohomological class of $(\Psi, \chi_R, \chi_S)$ does not depend on the choice of sections.
\begin{align*}
   \Psi_1(a\otimes b)={} & t_1(a)t_1(b)-t_1(ab) \\
   ={} & (t_2(a)+\gamma(a))(t_2(b)+\gamma(b))-(t_2(ab)+\gamma(ab))\\
   ={} & (t_2(a)t_2(b)-t_2(ab))+t_2(a)\gamma(b)+\gamma(a)t_2(b)-\gamma(ab)\\
   ={} & (t_2(a)t_2(b)-t_2(ab))+a\gamma(b)+\gamma(a)b-\gamma(ab)\\
   ={} & \Psi_2(a\otimes b)+\delta^1(\gamma)(a\otimes b),
 \end{align*}
 \begin{align*}
  {\chi_R}_1(a)={} & \hat{R}(t_1(a))-t_1(R(a))\\
  ={} & \hat{R}(t_2(a)+\gamma(a))-(t_2(R(a))+\gamma(R(a)))\\
   ={} & (\hat{R}(t_2(a))-t_2(R(a)))+\hat{R}(\gamma(a))-\gamma(R(a))\\
   ={} & {\chi_R}_2(a)+R_M(\gamma(a))-\gamma(R(a))\\
   ={} & {\chi_R}_2(a)-\Phi_R^1(\gamma)(a),
\end{align*}
\begin{align*}
  {\chi_S}_1(a)={} & \hat{S}(t_1(a))-t_1(S(a))\\
  ={} & \hat{S}(t_2(a)+\gamma(a))-(t_2(S(a))+\gamma(S(a)))\\
   ={} & (\hat{S}(t_2(a))-t_2(S(a)))+\hat{S}(\gamma(a))-\gamma(S(a))\\
   ={} & {\chi_S}_2(a)+S_M(\gamma(a))-\gamma(S(a))\\
   ={} & {\chi_S}_2(a)-\Phi_S^1(\gamma)(a).
\end{align*}
That is, $(\Psi_1, {\chi_R}_1, {\chi_S}_1)=(\Psi_2, {\chi_R}_2, {\chi_S}_2)+d^1(\gamma)$. Thus $(\Psi_1, {\chi_R}_1, {\chi_S}_1)$ and $(\Psi_2, {\chi_R}_2, {\chi_S}_2)$ form the same cohomological class in $\mathrm{H}_{\mathrm{RBS}}^2(A,M)$.
\end{proof}

\begin{prop}
Let $M$ be a vector space and $R_M,S_M\in \mathrm{End}_{\mathbb{K}}(M)$. Then $(M,R_M,S_M)$ is a Rota-Baxter system with trivial multiplication. Let $(A, R,S)$ be a Rota-Baxter system. Two isomorphic abelian extensions of Rota-Baxter system $(A, R,S)$ by $(M,R_M,S_M)$ give rise to the same cohomology class in $\mathrm{H}^2_{\mathrm{RBS}} (A, M)$.
\end{prop}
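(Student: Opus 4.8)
The plan is to transport a section along the isomorphism $\zeta$ and to check that the $2$-cocycle attached to the transported section coincides with the one attached to the original section; combined with the previous proposition, which says the class does not depend on the chosen section, this gives the claim.

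First I would set up notation. Let $\zeta : (\hat{A_1},\hat{R_1},\hat{S_1}) \to (\hat{A_2},\hat{R_2},\hat{S_2})$ be the isomorphism of Rota-Baxter systems making the diagram (\ref{iso}) commute; thus $\zeta\circ i_1 = i_2$, $p_2\circ\zeta = p_1$, and, being a morphism of Rota-Baxter systems, $\zeta$ is an algebra isomorphism with $\zeta\circ\hat{R_1} = \hat{R_2}\circ\zeta$ and $\zeta\circ\hat{S_1} = \hat{S_2}\circ\zeta$. Pick any section $t_1 : A\to\hat{A_1}$ of $p_1$ and put $t_2 := \zeta\circ t_1$. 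Then $p_2\circ t_2 = p_2\circ\zeta\circ t_1 = p_1\circ t_1 = \mathrm{Id}_A$, so $t_2$ is a section of $p_2$.

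Next I would observe that, since $\zeta\circ i_1 = i_2$, the isomorphism $\zeta$ identifies $i_1(M)\subseteq\hat{A_1}$ with $i_2(M)\subseteq\hat{A_2}$ compatibly with the identity map of $M$, and that a short computation using the multiplicativity of $\zeta$ then shows that the sections $t_1$ and $t_2$ induce the same $A$-bimodule structure on $M$; together with the fact that $R_M$ and $S_M$ are the same, this means the two extensions induce the same Rota-Baxter system bimodule structure on $M$, so that $\mathrm{H}_{\mathrm{RBS}}^2(A,M)$ is the same group for both extensions. (This also reproves part (i) of the previous proposition in the present situation.)

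Finally I would compare the $2$-cocycles. Writing $(\Psi_j,{\chi_R}_j,{\chi_S}_j)$ for the cocycle built from the section $t_j$, the key point is that $\Psi_1(a\otimes b) = t_1(a)t_1(b)-t_1(ab)$, ${\chi_R}_1(a) = \hat{R_1}(t_1(a))-t_1(R(a))$ and ${\chi_S}_1(a) = \hat{S_1}(t_1(a))-t_1(S(a))$ all lie in $M$, on which $\zeta$ acts as the identity, while $\zeta$ is multiplicative and intertwines $\hat{R_1}$ with $\hat{R_2}$ and $\hat{S_1}$ with $\hat{S_2}$. Hence, for instance,
\[
\Psi_2(a\otimes b) = t_2(a)t_2(b)-t_2(ab) = \zeta\bigl(t_1(a)t_1(b)-t_1(ab)\bigr) = \Psi_1(a\otimes b),
\]
and likewise ${\chi_R}_2 = {\chi_R}_1$ and ${\chi_S}_2 = {\chi_S}_1$. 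Therefore the two cocycles are literally equal, so a fortiori cohomologous, and by the previous proposition their common class is the invariant of each extension; the two isomorphic abelian extensions thus give the same class in $\mathrm{H}_{\mathrm{RBS}}^2(A,M)$. There is no genuine obstacle in this argument: the whole content is the initial observation that $\zeta$ carries sections to sections and acts as the identity on $M$, after which everything reduces to applying $\zeta$ to the defining formulas for $\Psi$, $\chi_R$ and $\chi_S$.
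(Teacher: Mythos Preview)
Your proposal is correct and follows essentially the same approach as the paper: transport a section $t_1$ along $\zeta$ to get $t_2=\zeta\circ t_1$, use that $\zeta$ is a Rota-Baxter system isomorphism restricting to the identity on $M$, and conclude that the cocycles $(\Psi_j,{\chi_R}_j,{\chi_S}_j)$ built from $t_1$ and $t_2$ are literally equal. Your explicit invocation of the section-independence proposition to finish is a small clarification the paper leaves implicit, but otherwise the arguments coincide.
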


\begin{proof}
Suppose $(\hat{A}_1,\hat{R}_1,\hat{S}_1)$ and $(\hat{A}_2,\hat{R}_2,\hat{S}_2)$ are two abelian extensions of $(A,R,S)$ by $(M,R_M,S_M)$ as is given in (\ref{iso}). Let $t_1$ be a section of $(\hat{A}_1,\hat{R}_1,\hat{S}_1)$. As $p_2\circ \zeta=p_1$, we have
\[p_2\circ (\zeta\circ t_1)=p_1\circ t_1=Id_A.\]
Therefore, $\zeta\circ t_1$ is a section of $(\hat{A}_2,\hat{R}_2,\hat{S}_2)$. Denote $t_2=\zeta\circ t_1$. Since $\zeta$ is a homomorphism of Rota-Baxter systems such that $\zeta|_M=Id_M, \zeta(am)=\zeta(t_1(a)m)=t_2(a)m=am$, so $\zeta|_M: M\to M$ is compatible with the induced Rota-Baxter system bimodule structures. We have
\begin{align*}
   \Psi_2(a\otimes b)={} & t_2(a)t_2(b)-t_2(ab)\\
   ={} & \zeta(t_1(a))\zeta(t_1(b))-\zeta(t_1(ab)) \\
   ={} & \zeta(t_1(a)t_1(b)-t_1(ab)) \\
    ={} & \zeta(\Psi_1(a\otimes b))\\
   ={} & \Psi_1(a\otimes b),
 \end{align*}
 \begin{align*}
  {\chi_R}_2(a)={} & (\hat{R}_2(t_2(a))-t_2(R(a)))\\
  ={} & \hat{R}_2(\zeta(t_1(a)))-\zeta(t_1(R(a)))\\
  ={} & \zeta(\hat{R}_1(t_1(a))-t_1(R(a)))\\
  ={} & \zeta({\chi_R}_1(a))\\
   ={} & {\chi_R}_1(a),
\end{align*}
 \begin{align*}
  {\chi_S}_2(a)={} & (\hat{S}_2(t_2(a))-t_2(S(a)))\\
  ={} & \hat{S}_2(\zeta(t_1(a)))-\zeta(t_1(S(a)))\\
  ={} & \zeta(\hat{S}_1(t_1(a))-t_1(S(a)))\\
  ={} & \zeta({\chi_S}_1(a))\\
   ={} & {\chi_S}_1(a).
\end{align*}
Hence, two isomorphic abelian extensions give rise to the same cohomology class in $H^2_{RBS} (A, M)$.
\end{proof}

Now we consider the reverse direction. Let  $(M,R_M,S_M)$  be a Rota-Baxter system bimodule over Rota-Baxter system $(A, R,S)$, given three linear maps
$\Psi : A \otimes A \to M$ and $\chi_R,\,\chi_S  : A \to M$, one can define a multiplication $\cdot_{\Psi}$ and two operators $R_{\chi},S_{\chi}$ on $A\oplus M$ by Equations (\ref{a}) and (\ref{b})(\ref{c}). The following fact is important:

\begin{prop}
The quadruple $(A\oplus M, \cdot_{\Psi}, R_{\chi},S_{\chi})$  is a Rota-Baxter system if and only if $(\Psi, \chi_R, \chi_S)$ is a 2-cocycle in the cochain complex of the Rota-Baxter system $(A, R, S)$ with coefficients in the Rota-Baxter system bimodule $(M, R_M, S_M)$.  In this case, we obtain an abelian extension
$$ \xymatrix@C=0.5cm{
    0 \ar[r] & (M,R_M,S_M) \xrightarrow{\binom{\,0\,}{\mathrm{Id}_M}} & (A\oplus M,R_{\chi},S_{\chi})  \xrightarrow{(\,\mathrm{Id}_A\quad 0\,)} & (A,R,S) \ar[r] & 0 }$$
and the canonical section $t=\binom{\,\mathrm{Id}_A\,}{0}: (A,R,S)\to (A\oplus M,R_{\chi},S_{\chi})$ endows $M$ with the original Rota-Baxter system bimodule structure.
\end{prop}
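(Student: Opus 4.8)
The plan is to decompose the claimed Rota-Baxter system structure on $A\oplus M$ into its underlying associative algebra and its two operators, and to match each part with the corresponding summand of the $2$-cocycle condition under $\mathrm{C}_{\mathrm{RBS}}^2(A,M)=\mathrm{C}_{\mathrm{Alg}}^2(A,M)\oplus\mathrm{C}_{\mathrm{RBSO}}^1(A,M)$. Recall from (\ref{differential}) that $d^2(\Psi,(\chi_R,\chi_S))=(\delta^2(\Psi),\,-\partial^1(\chi_R,\chi_S)-\Phi^2(\Psi))$, so $(\Psi,\chi_R,\chi_S)$ is a $2$-cocycle precisely when $\delta^2(\Psi)=0$ and $\Phi^2(\Psi)+\partial^1(\chi_R,\chi_S)=0$; I will produce exactly these two conditions.

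First I would handle $(A\oplus M,\cdot_\Psi)$: since $(a,m)\cdot_\Psi(b,n)=(ab,\,an+mb+\Psi(a,b))$ and $M\cdot M=0$, this is the standard square-zero extension, and expanding its associativity (the same computation as the $n=1$ case of Section 2, with $\Psi$ in the role of the infinitesimal) shows it is associative if and only if $\delta^2(\Psi)=0$. For the rest I may therefore assume $\cdot_\Psi$ associative. Next I would substitute $R_\chi$ into Equation (\ref{3}) and $S_\chi$ into Equation (\ref{4}), using (\ref{a})-(\ref{c}). In each case the $A$-components of the two sides reduce to the relation (\ref{3}), resp.\ (\ref{4}), for $(A,R,S)$, and so agree automatically. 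Comparing $M$-components, the right-hand side acquires terms $R(a)R_M(n)$ and $R_M(m)R(b)$ (resp.\ with $S$), which I would absorb via the Rota-Baxter system bimodule axioms (the four equations beginning with (\ref{5})); after these cancellations the $m,n$-dependent terms disappear and the identity coming from (\ref{3}) is exactly the first ($R$-)component, and the one coming from (\ref{4}) exactly the second ($S$-)component, of $\Phi^2(\Psi)(a\otimes b)+\partial^1(\chi_R,\chi_S)(a\otimes b)=0$ --- indeed this is the computation in the proof that the triple $(\Psi,\chi_R,\chi_S)$ attached to a genuine extension is a cocycle, read in reverse. Combined with the previous paragraph, this gives the asserted equivalence.

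Finally, assuming the cocycle condition holds, I would verify directly from (\ref{a})-(\ref{c}) that $\binom{0}{\mathrm{Id}_M}$ and $(\mathrm{Id}_A\ 0)$ are morphisms of Rota-Baxter systems: they are algebra maps for $\cdot_\Psi$, and $R_\chi(0,m)=(0,R_M(m))$, $R_\chi(a,0)=(R(a),\chi_R(a))$, and likewise for $S_\chi$. Hence the displayed sequence is an abelian extension of $(A,R,S)$ by $(M,R_M,S_M)$. For the canonical section $t=\binom{\mathrm{Id}_A}{0}$ one computes $t(a)\cdot_\Psi(0,m)=(0,am)$ and $(0,m)\cdot_\Psi t(a)=(0,ma)$, so the bimodule actions produced by Proposition \ref{bimod} coincide with the original ones; and since $R_\chi$ and $S_\chi$ restrict on $\{0\}\oplus M$ to $(0,m)\mapsto(0,R_M(m))$ and $(0,m)\mapsto(0,S_M(m))$, the induced operators on $M$ are $R_M$ and $S_M$ themselves.

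The main obstacle is the bookkeeping in the second paragraph: expanding $R_\chi\bigl(R_\chi(a,m)\cdot_\Psi(b,n)+(a,m)\cdot_\Psi S_\chi(b,n)\bigr)$ (and its $S_\chi$ analogue) produces a long list of $M$-valued summands, and one must group them so that the four bimodule axioms remove precisely the $m,n$-dependent part and leave the pure $\Phi^2(\Psi)$ and $\partial^1(\chi_R,\chi_S)$ expressions. One should in particular keep the $R_M/S_M$ labels straight, since the $S$-component of the operator identity contains the mixed term $S_M(\chi_R(a)b)$, mirroring the mixed $R_M/S_M$ pattern already built into the differentials $\partial^\bullet$ and $\Phi^\bullet$.
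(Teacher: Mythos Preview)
Your proposal is correct and follows essentially the same approach as the paper: matching associativity of $\cdot_\Psi$ with $\delta^2(\Psi)=0$ and the operator identities for $R_\chi,S_\chi$ with $\Phi^2(\Psi)+\partial^1(\chi_R,\chi_S)=0$, the converse being the same computation read backwards. You are in fact a bit more explicit than the paper in two places---invoking the bimodule axioms to eliminate the $m,n$-dependent terms (the paper simply writes down the reduced equations) and spelling out why the canonical section recovers the original bimodule structure (the paper declares this ``clear'')---but the route is the same.
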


\begin{proof}
If $(A\oplus M,\cdot_{\Psi},R_{\chi},S_{\chi})$ is a Rota-Baxter system, then the associativity of $\cdot_{\Psi}$ implies
\begin{equation}
 a\Psi(b\otimes c)-\Psi(ab\otimes c)+\Psi(a\otimes bc)-\Psi(a\otimes b)c =0,
\end{equation}
which means $\delta^2(\Psi)=0$ in $\mathrm{C}^\bullet (A,M)$. Since $(R_{\chi},S_{\chi})$ are Rota-Baxter system operators, for any $ a,b\in A,\,m,n\in M$, we have
\begin{align}
  R_{\chi}((a,m))\cdot_{\Psi}R_{\chi}((b,n))={} & R_{\chi}(R_{\chi}(a,m)\cdot_{\Psi} (b,n) + (a,m)\cdot_{\Psi} S_{\chi}(b,n)), \\
  S_{\chi}((a,m))\cdot_{\Psi}S_{\chi}((b,n))={} & S_{\chi}(R_{\chi}(a,m)\cdot_{\Psi} (b,n) + (a,m)\cdot_{\Psi} S_{\chi}(b,n)).
\end{align}
Then $\chi_R, \chi_S, \Psi$ satisfy the following equations:
\begin{align*}
  & R(a)\chi_R(b)+\chi_R(a)R(b)+\Psi(R(a)\otimes R(b))\\
 ={} & R_M(\chi_R(a)b) +R_M(a\chi_S(b))+\chi_R(R(a)b+aS(b))\\
 + & R_M\Bigl(\Psi(R(a)\otimes b+a\otimes S(b))\Bigr),
\end{align*}
\begin{align*}
   & S(a)\chi_S(b)+\chi_S(a)S(b)+\Psi(S(a)\otimes S(b)) \\
  ={} & S_M(\chi_R(a)b)+  S_M(a\chi_S(b))+\chi_S(R(a)b+aS(b))\\
  + & S_M\Bigl(\Psi(R(a)\otimes b+a\otimes S(b))\Bigr),
\end{align*}
That is,
\[\partial^1(\chi_R, \chi_S)+\Phi^2(\Psi)=0.\]
Hence, $(\Psi, \chi_R, \chi_S)$ is a 2-cocycle.

Conversely, if $(\Psi, \chi_R, \chi_S)$ is a 2-cocycle, one can easily check that $(A\oplus M, \cdot_{\Psi}, R_{\chi},S_{\chi})$  is a Rota-Baxter system.

The last statement is clear.
\end{proof}

Finally, we show the following result:
\begin{prop}
Two cohomologous 2-cocycles give rise to isomorphic abelian extensions.
\end{prop}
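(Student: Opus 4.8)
The plan is to turn the cohomology relation between the two $2$-cocycles into an explicit ``gauge transformation'' of the associated semidirect-product extensions. Write the two cocycles as $(\Psi,\chi_R,\chi_S)$ and $(\Psi',\chi_R',\chi_S')$, and choose the coboundary so that $(\Psi',\chi_R',\chi_S')=(\Psi,\chi_R,\chi_S)+d^1(\gamma)$ for some $1$-cochain; after absorbing the $\mathrm{C}_{\mathrm{RBSO}}^0(A,M)$-summand of that $1$-cochain exactly as in the proof of the rigidity theorem above (using the chain-map identity $\partial^0\circ\Phi^0=\Phi^1\circ\delta^0$), we may take the coboundary to be $d^1(\gamma)$ with $\gamma\colon A\to M$ linear. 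By formula (\ref{differential}) for $d$, and recalling $\delta^1(\gamma)(a\otimes b)=a\gamma(b)-\gamma(ab)+\gamma(a)b$, $\Phi_R^1(\gamma)(a)=\gamma(R(a))-R_M(\gamma(a))$, $\Phi_S^1(\gamma)(a)=\gamma(S(a))-S_M(\gamma(a))$, this unwinds into the three identities
\[\Psi'=\Psi+\delta^1(\gamma),\qquad \chi_R'=\chi_R-\Phi_R^1(\gamma),\qquad \chi_S'=\chi_S-\Phi_S^1(\gamma),\]
which are precisely the relations that appeared in the proof that the cohomology class of $(\Psi,\chi_R,\chi_S)$ is independent of the section.

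The candidate isomorphism between the abelian extension $(A\oplus M,\cdot_{\Psi'},R_{\chi'},S_{\chi'})$ built from $(\Psi',\chi_R',\chi_S')$ via (\ref{a})--(\ref{c}) and the one $(A\oplus M,\cdot_{\Psi},R_{\chi},S_{\chi})$ built from $(\Psi,\chi_R,\chi_S)$ is
\[\zeta\colon A\oplus M\longrightarrow A\oplus M,\qquad \zeta(a,m)=(a,\,m+\gamma(a)).\]
This $\zeta$ is linear, bijective with inverse $(a,m)\mapsto(a,m-\gamma(a))$, satisfies $\zeta(0,m)=(0,m)$ and induces $\mathrm{Id}_A$ on the quotient, so it automatically sits inside the commutative diagram (\ref{iso}); it remains only to check that $\zeta$ is a morphism of Rota-Baxter systems. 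For the multiplications, expanding $\zeta\bigl((a,m)\cdot_{\Psi'}(b,n)\bigr)$ and $\zeta(a,m)\cdot_{\Psi}\zeta(b,n)$ by (\ref{a}), the difference of the $M$-components is $\Psi'(a\otimes b)+\gamma(ab)-\Psi(a\otimes b)-a\gamma(b)-\gamma(a)b$, which vanishes by the first identity above. For the operators, by (\ref{b}) both $\zeta\circ R_{\chi'}$ and $R_{\chi}\circ\zeta$ send $(a,m)$ to a pair with first component $R(a)$, and their $M$-components coincide because $\chi_R'(a)+\gamma(R(a))=\chi_R(a)+R_M(\gamma(a))$ is exactly the second identity above; the computation for $S_\chi$ via (\ref{c}) and the third identity is identical. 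Hence $\zeta$ is an isomorphism of Rota-Baxter systems, so the two abelian extensions are isomorphic.

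The argument is thus essentially bookkeeping, and I expect the only delicate points to be the signs and the shift in (\ref{differential}): one must unwind ``cohomologous'' in the direction that makes $\zeta$ go from the primed extension to the unprimed one (rather than the reverse), and one must be sure the degree-$0$ summand $\mathrm{C}_{\mathrm{RBSO}}^0(A,M)$ of $\mathrm{C}_{\mathrm{RBS}}^1(A,M)$ really is absorbed into $\gamma$ as claimed, for which the chain-map identity $\partial^0\circ\Phi^0=\Phi^1\circ\delta^0$ is invoked just as in the rigidity proof. Every remaining step is a computation of exactly the kind already carried out several times in this section.
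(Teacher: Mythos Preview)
Your proof is correct and follows essentially the same approach as the paper: both absorb the $\mathrm{C}_{\mathrm{RBSO}}^0$-part of the $1$-cochain into a single $\gamma:A\to M$ via the chain-map identity $\partial^0\circ\Phi^0=\Phi^1\circ\delta^0$, and then define the isomorphism $\zeta(a,m)=(a,m\pm\gamma(a))$ between the two semidirect-product extensions. In fact you supply more detail than the paper, which simply asserts that $\zeta$ is an isomorphism without writing out the verification for the multiplication and the operators.
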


\begin{proof}
Given two 2-cocycles $(\Psi_1, {\chi_R}_1, {\chi_S}_1)$ and $(\Psi_2, {\chi_R}_2, {\chi_S}_2)$, we can construct two abelian extensions $(A\oplus M, \cdot_{\Psi_1}, R_{\chi_1},S_{\chi_1})$ and $(A\oplus M, \cdot_{\Psi_2}, R_{\chi_2},S_{\chi_2})$ via Equations (\ref{a}) and (\ref{b})(\ref{c}). If they represent the same cohomology class in $\mathrm{H}^2_{\mathrm{RBS}} (A, M)$, then there exists two linear maps $\gamma_0:\mathbb{K}\to M, \gamma_1:A\to M$ such that
\[(\Psi_1, {\chi_R}_1, {\chi_S}_1)=(\Psi_2, {\chi_R}_2, {\chi_S}_2)+(\delta^1(\gamma_1),-\Phi^1(\gamma_1)-\partial^0(\gamma_0,\gamma_0)).\]
Notice that $\partial^0(\gamma_0,\gamma_0)=\partial^0\circ\Phi^0(\gamma_0)=\Phi^1\circ\delta^0(\gamma_0)$. Define $\gamma:A\to M$ to be $\gamma_1+\delta^0(\gamma_0)$. Then $\gamma$ satisfies
\[(\Psi_1, {\chi_R}_1, {\chi_S}_1)=(\Psi_2, {\chi_R}_2, {\chi_S}_2)+(\delta^1(\gamma),-\Phi^1(\gamma)).\]
Define $\zeta:A\oplus M \to A\oplus M$ by
\[\zeta(a,m):=(a,-\gamma(a)+m).\]
Then $\zeta$ is an isomorphism of $(A\oplus M, \cdot_{\Psi_1}, R_{\chi_1},S_{\chi_1})$ and $(A\oplus M, \cdot_{\Psi_2}, R_{\chi_2},S_{\chi_2})$.
\end{proof}

\section*{Acknowledgements} This research is supported by NSFC (No.12031014). We are very grateful to Guodong Zhou for valuable discussions and for suggesting this topic.

\end{document}